\newtheorem{proposition}{Proposition}
\newtheorem{theorem}[proposition]{Theorem}
\newtheorem{lemma}[proposition]{Lemma}
\theoremstyle{remark}
\theoremstyle{definition}
\newtheorem{definition}[proposition]{Definition}
\numberwithin{equation}{section}
\numberwithin{proposition}{section}
\numberwithin{figure}{section}
\numberwithin{table}{section}
\newcommand{\N}{\mathbb{N}}
\newcommand{\R}{\mathbb{R}}
\newcommand{\E}{\mathbb{E}}
\renewcommand{\P}{\mathbb{P}}
\newcommand{\Rd}{{\mathbb{R}^d}}
\newcommand{\ep}{\varepsilon}
\newcommand{\eps}{\varepsilon}
\renewcommand{\le}{\leqslant}
\renewcommand{\ge}{\geqslant}
\renewcommand{\subset}{\subseteq}
\newcommand{\la}{\left\langle}
\newcommand{\ra}{\right\rangle}
\newcommand{\Ll}{\left}
\newcommand{\Rr}{\right}
\renewcommand{\d}{\mathrm{d}}
\renewcommand{\bar}{\overline}
\newcommand{\td}{\widetilde}
\newcommand{\1}{\mathds{1}}
\newcommand{\mcl}{\mathcal}
\newcommand{\al}{\alpha}
\newcommand{\de}{\delta}
\newcommand{\si}{\sigma}
\renewcommand{\t}{\mathsf{t}}
\begin{document}

\author[J.-C. Mourrat]{J.-C. Mourrat}
\address[J.-C. Mourrat]{DMA, Ecole normale sup\'erieure,
CNRS, PSL University, Paris, France}
\email{mourrat@dma.ens.fr}

\keywords{spin glass, statistical inference, Hamilton-Jacobi equation}
\subjclass[2010]{82B44, 82D30}
\date{\today}

\title{Hamilton-Jacobi equations for mean-field disordered systems}

\begin{abstract}
We argue that Hamilton-Jacobi equations provide a convenient and intuitive approach for studying the large-scale behavior of mean-field disordered systems. This point of view is illustrated on the problem of inference of a rank-one matrix. 
We compute the large-scale limit of the free energy by showing that it satisfies an approximate Hamilton-Jacobi equation with asymptotically vanishing viscosity parameter and error term.
\end{abstract}

\maketitle

\section{Motivation}

The goal of this paper is to propose a new approach to the computation of the large-scale limit of the free energy of mean-field disordered systems. The new method is based on showing that the finite-volume free energy satisfies an approximate Hamilton-Jacobi equation, with viscosity parameter and error term that vanish in the large-scale limit. 

\smallskip

The paper grew out of my attempt to build some intuition for the celebrated Parisi formula for such systems, see \cite{MPV,guerra,Tpaper,Tbook2,pan}. The classical variational formulation of the free energy allows to write this quantity as the supremum of an energy and an entropy terms. For the Sherrington-Kirkpatrick model, the Parisi formula identifies the limit as an infimum instead, defying intuition. The plot thickens further when one considers more complicated systems such as the perceptron and the Hopfield models, which are expected to have limit free energies given by saddle-point variational problems \cite{obnoxious}.

\smallskip

In this paper, I propose a change of viewpoint that puts the main emphasis on the fact that the free energy satisfies a Hamilton-Jacobi equation, up to a small error. In this new point of view, this is the fundamental property that should be the center of attention and should receive an explanation. As is well-known, if the nonlinearity in the Hamilton-Jacobi equation is convex, then the solution can be expressed as an inf-sup variational problem. This suggests a Hamilton-Jacobi interpretation with convex nonlinearity for the Sherrington-Kirkpatrick model. However, it is unclear why one should expect the nonlinearity to always be convex. In fact, in the model that will be the focus of our attention here, the nonlinearity is concave, not convex. This still allows for a variational representation, but as a sup-inf instead of an inf-sup. More importantly, this suggests that the Hamilton-Jacobi point of view may be more robust and transparent than the variational representations.

\smallskip

The observation that finite-volume free energies satisfy approximate Hamilton-Jacobi equations already appeared in the physics literature \cite{barra1,barra2}. As was explained there, this idea can easily be made rigorous in the case of the Curie-Weiss model. Although the interactions in this model are not disordered, it is illustrative to explain the main ideas in this simple case.

\smallskip

For the Curie-Weiss problem, we would like to compute, for each $t \ge 0$, the large-$N$ limit of the free energy
\begin{equation*}  
\mathsf F_N^\circ(t) := \frac 1 N \log \sum_{\sigma \in \{\pm 1\}^N} 2^{-N} \exp \Ll( \frac t N \sum_{i,j = 1}^N \sigma_i \sigma_j \Rr).
\end{equation*}
We aim to do so by identifying a PDE satisfied by $\mathsf F_N^\circ(t)$, possibly up to error terms that vanish in the large-$N$ limit. However, at this stage we can only calculate derivatives with respect to $t$ and infer information about the distribution of $\sum_{i,j = 1}^N \sigma_i \sigma_j$ under the associated Gibbs measure. (For instance, the first and second derivatives are related to the mean and variance of this variable.) In order to find a closed set of equations, we need to ``enrich'' our free energy by introducing another quantity into the problem. This additional quantity should hopefully be simpler than $\sum_{i,j} \sigma_i \sigma_j$, and display some nontrivial correlations with the latter. In the present case, this quantity is very easy to guess: it is simply the average magnetization $\sum_{i = 1}^N \sigma_i$. (In more complicated settings, our intuition can be guided e.g.\ by cavity calculations.) Although we may a priori only care about calculating $\mathsf F_N^\circ(t)$, it is thus natural to introduce, for each $t \ge 0$ and $h \in \R$, the enriched free energy
\begin{equation*}  
\mathsf F_N(t,h) :=  \frac 1 N \log \sum_{\sigma \in \{\pm 1\}^N} 2^{-N} \exp \Ll( \frac t N \sum_{i,j = 1}^N \sigma_i \sigma_j + h \sum_{i = 1}^N \sigma_i\Rr).
\end{equation*}
Denoting by $\la \cdot \ra$ the associated Gibbs measure, we then observe that
\begin{equation*}  
\partial_t \mathsf F_N = \frac{1}{N^2} \la \sum_{i,j = 1}^N \sigma_i \sigma_j \ra \quad \text{ and } \quad 
\partial_h \mathsf F_N = \frac{1}{N} \la \sum_{i = 1}^N \sigma_i \ra,
\end{equation*}
so that
\begin{equation*}  
\partial_t\mathsf F_N - (\partial_h \mathsf F_N)^2 = \frac 1 {N^2} \la \Ll( \sum_{i = 1}^N \sigma_i - \sum_{i = 1}^N\la  \sigma_i \ra \Rr)^2 \ra.
\end{equation*}
Since the right side in the identity above is a variance, we should expect it to be small. Moreover, since $\mathsf F_N(t,h)$ encodes complete information on the law of $\sum \sigma_i$, it should be possible to find an expression for this variance in terms of~$\mathsf F_N$. We find indeed that
\begin{equation*}  
\partial_t\mathsf F_N - (\partial_h \mathsf F_N)^2 = \frac 1 N \partial_h^2 \mathsf F_N.
\end{equation*}
On this simple example, the free energy thus solves an exact Hamilton-Jacobi equation with viscosity term equal to $N^{-1}$. After observing that the value of $\mathsf F_N(0,h)$ does not depend on $N$, we have completely identified the limit $\mathsf F_\infty$ of~$\mathsf F_N$ as the viscosity solution to
\begin{equation*}  
\partial_t \mathsf F_\infty - (\partial_h \mathsf F_\infty)^2 = 0.
\end{equation*}
In a nutshell, due to the mean-field character of the model, we expect to be able to identify a handful of quantities whose statistics are related to one another. These relations will produce non-trivial identities between the first derivatives of the free energy: a Hamilton-Jacobi equation. There will be error terms, which one may expect to control by second-order derivatives, since these second-order derivatives are equal to the variances of the quantities of interest. 

\smallskip

We aim to carry an argument that has a similar structure for disordered mean-field models. However, for the Sherrington-Kirkpatrick and similar models, an important difficulty arises: the number of informative quantities one needs to add to the ``enriched'' free energy is infinite. In physicists' language, the system has a functional order parameter. As is well-known, this is bound to create very important technical difficulties. We will thus focus on the simpler setting provided by an inference problem. In this context, an additional symmetry forces the system to be replica-symmetric for every choice of parameters, and thus a simpler argument based on the addition of a single quantity suffices to ``close the equation''. We define the model on which we will focus and state our main results in the next section.

\section{Rank-one estimation, main results}
\label{s.results}

We consider the problem of estimating a vector $\bar x = (\bar x_1,\ldots, \bar x_N) \in \R^N$ of independent entries distributed according to a bounded measure $P$, given the observations of 
\begin{equation*}  
Y := \sqrt{\frac{t}{N}} \, \bar x \, \bar x^\t + W, 
\end{equation*}
where $W = (W_{ij})_{1 \le i,j \le N}$ are independent standard Gaussian random variables, independent of the vector $\bar x$. We denote the joint law of $\bar x$ and $W$ by $\P$, with associated expectation $\E$. Note that we seek to recover $N$ parameters from $N^2$ observations, each with a signal-to-noise ratio of the order of $N^{-1}$; this should therefore be the critical scaling for the inference of $\bar x$. 

\smallskip

By Bayes' rule, the posterior distribution of $\bar x$ given the observation of $Y$ is the probability measure 
\begin{equation}  
\label{e.def.posterior}
\frac { e^{H_N(t,x)} \, \d P_N(x)}{\int_{\R^N}e^{H_N(t,x')} \, \d P_N(x')} ,
\end{equation}
where we use the shorthand notation $P_N$ for the product measure $P^{\otimes N}$, and where $H_N(t,x)$ is defined by
\begin{align}  
\notag
H_N(t,x) & 
:=
\sqrt \frac t N \sum_{i,j = 1}^N Y_{ij} x_i x_j - \frac t {2N}\sum_{i,j =1}^N x_i^2 x_j^2
\\
\notag
& =  \sum_{i,j = 1}^N \Ll( \sqrt \frac t N W_{ij} x_i x_j + \frac t N x_i x_j \bar x_i \bar x_j- \frac t {2N} x_i^2 x_j^2 \Rr) 
\\
\label{e.def.H1}
& = \sqrt \frac t N x \cdot W x + \frac t N (x \cdot \bar x)^2 - \frac t {2N} |x|^4.
\end{align}
This will be explained in more details and in a slightly more general context in Appendix~\ref{s.nishi}. Note that although we suppress it from the notation, the quantity $H_N(t,x)$ is random in that it depends on the realization of $\bar x$ and $W$. Throughout the paper, we write $|x|$ to denote the $\ell^2$ norm of the vector $x \in \R^N$.

\smallskip

Our goal is to understand the large-$N$ behavior of the normalizing constant in \eqref{e.def.posterior}. The asymptotic behavior of this quantity has already been obtained multiple times in the literature; we refer to \cite{dm,lkz,bar1,lm,bm1,bmm,eak} for references. As was explained above, the point of the present paper is to devise yet another proof of this result, which centers on the identification of an appropriate Hamilton-Jacobi equation. Natually, several elements of the proof presented here can also be found in these previous works; the main difference is the global structure of the argument.


\smallskip 

In the spirit of the previous section, we start by introducing an ``enriched'' system. Let $z = (z_i)_{1 \le i \le N}$ be a vector of independent standard Gaussian random variables, independent of $\bar x$ and $W$ under $\P$. For every $t, {h} \ge 0$ and $x \in \R^N$, we define 
\begin{equation*}  
H_N(t,{h},x) := 
\sqrt \frac t N\,  x \cdot W x + \frac t N (x \cdot \bar x)^2 - \frac t {2N} |x|^4
+ \sqrt{h} \, z \cdot x + {h} \, x \cdot \bar x - \frac {h} 2 |x|^2.
\end{equation*}
The difference between the quantity above and that in \eqref{e.def.H1}, namely
\begin{equation*}  
 \sqrt{h} \, z \cdot x + {h} \, x \cdot \bar x - \frac {h} 2 |x|^2 = \sum_{i = 1}^N \Ll(\sqrt{{h}} z_i x_i + {h} x_i \bar x_i - \frac {h} 2 x_i^2\Rr),
\end{equation*}
is the energy associated with the much simpler inference prolem in which we try to recover $\bar x \in \R^N$ from the observation of $\sqrt {h} \, \bar x + z \in \R^N$.
We define the free energy
\begin{equation}  
\label{e.enriched.free.energy}
F_N(t,{h}) := \frac 1 N \log\Ll( \int_{\R^N} e^{ H_N(t,{h},x) } \, \d P_N(x) \Rr) ,
\end{equation}
as well as its expectation (with respect to the variables $\bar x$, $W$ and $z$)
\begin{equation}  
\label{e.expectation.free.energy}
\bar F_N(t,{h}) := \E \Ll[ F_N(t,h) \Rr] .
\end{equation}
For every $h \ge 0$, we set
\begin{equation}  
\label{e.def.psi}
\psi({h}) := \E \log \int_\R \exp\Ll(\sqrt{h} z_1 x +h x \bar x_1 - \frac h 2 x^2 \Rr) \, \d P(x)  = \bar F_1(0,h).
\end{equation}
In this expression, all the variables are scalar. Observe that $\bar F_N(0,h) = \psi(h)$ does not depend on $N$.
Our main goal is to prove the following result.

\begin{theorem}[Convergence to HJ]
\label{t.hj}
For every $M \ge 1$, we have
\begin{equation*}  
\lim_{N\to \infty} \E \Ll[ \sup_{[0,M]^2} (F_N - f)^2 \Rr] = 0,
\end{equation*}
where $f(t,h)$ is the viscosity solution of the Hamilton-Jacobi equation 
\begin{equation}  
\label{e.hj}
\Ll\{
\begin{aligned}
 \partial_t f - 2(\partial_{h} f)^2 & = 0 \quad && \text{in } (0,+\infty)^2, \\
 - \partial_h f & = 0 \quad && \text{on }  (0,+\infty) \times \{0\},
\end{aligned}
\Rr.
\end{equation}
with initial condition $f(0,h) = \psi(h)$. 
\end{theorem}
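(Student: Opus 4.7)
The overall strategy has three phases: (i) derive an approximate Hamilton-Jacobi identity for $\bar F_N := \E F_N$ with an explicit error term; (ii) show that the error vanishes in a suitable integrated sense; (iii) pass to the limit via the stability of viscosity solutions, and then transfer the limit from $\bar F_N$ to $F_N$ by Gaussian concentration. Compactness of $\supp P$ implies that $F_N$ is almost surely jointly Lipschitz on compacts with constants uniform in $N$, so $\{\bar F_N\}$ is precompact in $C([0,M]^2)$. Moreover, the Nishimori identity forces $\partial_h \bar F_N \geq 0$, so any subsequential limit is nondecreasing in $h$, compatibly with the one-sided Neumann condition $-\partial_h f = 0$ on $\{h=0\}$.

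For the derivation, I would compute the partial derivatives of $\bar F_N$ and eliminate the Gaussian variables $W,z$ by Gaussian integration by parts. The Ito-like counterterms $-\frac{t}{2N}|x|^4$ and $-\frac{h}{2}|x|^2$ in $H_N$ are precisely those needed so that, combined with the Nishimori identity $\E\langle g(x,\bar x)\rangle = \E\langle g(x^1,x^2)\rangle$ applied to two independent replicas $x^1,x^2$ drawn from the posterior, one obtains
\begin{equation*}
\partial_h \bar F_N \;=\; \tfrac{1}{2}\,\E\langle R\rangle, \qquad \partial_t \bar F_N \;=\; \tfrac{1}{2}\,\E\langle R^2\rangle,
\end{equation*}
where $R := \frac{1}{N}\,x\cdot\bar x$ is the overlap and $\langle\cdot\rangle$ denotes the Gibbs expectation attached to $H_N(t,h,\cdot)$. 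Subtracting and applying the law of total variance yields the approximate equation
\begin{equation*}
\partial_t \bar F_N - 2\bigl(\partial_h \bar F_N\bigr)^2 \;=\; \tfrac{1}{2}\,\E\bigl\langle(R-\langle R\rangle)^2\bigr\rangle \;+\; \tfrac{1}{2}\,\mathrm{Var}\bigl(\langle R\rangle\bigr),
\end{equation*}
i.e.\ the sum of the expected thermal variance of the overlap and the disorder variance of its mean.

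Both pieces on the right must be shown to vanish. For the disorder variance, Gaussian concentration applied to $F_N$, viewed as an $O(N^{-1/2})$-Lipschitz function of $(W,z)$ on compacts, yields $\mathrm{Var}(F_N) = O(1/N)$ pointwise; a smoothing-in-$h$ step at scale $\eps_N \to 0$ with $N\eps_N^2 \to \infty$ then transfers this to a bound on $\mathrm{Var}(\partial_h F_N)$. The thermal variance is the more delicate piece: using Nishimori together with Gaussian integration by parts in $z$, $\E\langle(R-\langle R\rangle)^2\rangle$ can be expressed in terms of $\partial_h \E\langle R\rangle = 2\partial_h^2 \bar F_N$, so its integral over $h$ telescopes and is controlled by the oscillation of $\partial_h \bar F_N$, which is $O(1)$ uniformly in $N$ modulo a vanishing factor. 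Together, these show $\int_0^M\bigl\{\partial_t \bar F_N - 2(\partial_h \bar F_N)^2\bigr\}\,\d h = o(1)$.

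With the error vanishing in this integrated sense, I would identify the limit by a viscosity-stability argument adapted to the integrated form: the uniform pointwise bounds on $\partial_t \bar F_N$ and $\partial_h\bar F_N$, combined with a sup-convolution regularization in $h$, allow one to show that any uniform limit $\bar f$ of $\bar F_N$ is a viscosity solution of \eqref{e.hj} on $(0,M)^2$, with initial data $\psi$ and the one-sided boundary condition preserved by monotonicity. Uniqueness of the viscosity solution then forces $\bar f = f$, and a final Gaussian concentration estimate on $F_N - \bar F_N$ delivers the $L^2$ bound asserted in the theorem. The principal obstacle is the overlap-concentration estimate in phase (ii): in generic mean-field spin-glass models this is the crux of the Parisi theory and generally demands a functional order parameter, whereas here the Nishimori symmetry collapses the overlap distribution to a scalar and the enriching parameter $h$ itself plays the role of a perturbation, so the bound can be read off from the HJ structure; nevertheless, the fact that it is only available in integrated form is what makes the viscosity-stability step nontrivial and requires the sup-convolution regularization.
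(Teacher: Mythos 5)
Your proposal follows essentially the same route as the paper: the same exact identity $\partial_t\bar F_N - 2(\partial_h\bar F_N)^2 = \tfrac12\bigl(\E\la(R-\la R\ra)^2\ra + \var(\la R\ra)\bigr)$ obtained from Gaussian integration by parts and the Nishimori identity, the same control of the thermal variance by $N^{-1}\partial_h^2\bar F_N$ (plus $h^{-1}$ error terms) and of the disorder variance by concentration of $F_N$ after an averaging step in $h$, and the same viscosity-stability plus comparison-principle endgame with the boundary condition handled via $\partial_h\bar F_N\ge0$. The only cosmetic differences are that you regularize in $h$ by sup-convolution where the paper uses a local average $G_N$ at scale $\de_N$, and you integrate the $N^{-1}\partial_h^2\bar F_N$ term over $h$ where the paper uses the second-order condition at the contact point of the test function; both are standard variants of the same idea.
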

The next proposition is the main ingredient for the proof of Theorem~\ref{t.hj}. It states that the averaged free energy satisfies an approximate Hamilton-Jacobi equation with asymptotically vanishing viscosity parameter. 
\begin{proposition}[Approximate HJ in finite volume]
There exists $C < \infty$ such that for every $N \ge 1$ and uniformly over $[0,\infty)^2$,
\label{p.approx.hj}
\begin{equation*}  
0 \le \partial_t \bar F_N - 2(\partial_{h} \bar F_N)^2 \le \frac 2 {N}  \partial_{h}^2 \bar F_N  + 2 \E \Ll[ \Ll(\partial_h F_N - \partial_h \bar F_N\Rr)^2 \Rr]  +\frac{C}{N}\Ll(\frac 1 h + \frac 1 {\sqrt{h}}\Rr),
\end{equation*}
and moreover, 
\begin{equation}  
\label{e.boundary}
\partial_h \bar F_N\ge 0.
\end{equation}
\end{proposition}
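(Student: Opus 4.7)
The plan is to derive, via Gaussian integration by parts (GIBP) and the Nishimori identity, an \emph{exact} algebraic identity for $\partial_t \bar F_N - 2(\partial_h \bar F_N)^2$ in terms of $\partial_h^2 \bar F_N$ and certain quenched variances, and then to reduce those quenched variances to $\E(\partial_h F_N - \partial_h \bar F_N)^2$ plus an error of the claimed form.

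\smallskip

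\emph{Step 1: first derivatives.} Differentiating $\bar F_N$ under the log-integral and applying GIBP (in $W$ for $\partial_t$, in $z$ for $\partial_h$), then using Nishimori to trade the replica overlap $x\cdot x'$ for $x\cdot \bar x$, gives, with $A := x\cdot \bar x$,
\begin{equation*}
\partial_t \bar F_N = \tfrac{1}{2N^2}\,\E\la A^2\ra, \qquad \partial_h \bar F_N = \tfrac{1}{2N}\,\E\la A\ra.
\end{equation*}
The boundary condition \eqref{e.boundary} is immediate from $\E\la A\ra = \E\la x\cdot x'\ra = \E|\la x\ra|^2 \geq 0$ (Nishimori). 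Subtracting then yields $\partial_t \bar F_N - 2(\partial_h \bar F_N)^2 = \tfrac{1}{2N^2}(\E\la A^2\ra - (\E\la A\ra)^2)$, a total variance of $A$ under $\P\otimes \la\cdot\ra$ and hence nonnegative; this is the lower bound.

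\smallskip

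\emph{Step 2: exact identity with $\partial_h^2 \bar F_N$.} Starting from $\partial_h \la A\ra = \cov_\la(A, \partial_h H_N)$ with $\partial_h H_N = \tfrac{1}{2\sqrt h}z\cdot x + A - \tfrac 1 2 |x|^2$, a GIBP in $z$ rewrites $\E\cov_\la(A, z\cdot x)$ in terms of $\E\cov_\la(A, |x|^2)$ and $\E\cov_\la(A, x\cdot x')$, producing the clean identity
\begin{equation*}
\E[\var_\la(A)] = 2N\,\partial_h^2 \bar F_N + \E[\cov_\la(A, x\cdot x')].
\end{equation*}
Next, Nishimori exchangeability of $(\bar x, x, x')$ gives $\E\la A\,(x\cdot x')\ra = \E[\la A\ra^2]$, so the cross-term equals $\var_\P(\la A\ra) - \cov_\P(\la A\ra, \la x\cdot x'\ra)$. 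Combined with $\E\la A^2\ra - (\E\la A\ra)^2 = \E\var_\la(A) + \var_\P(\la A\ra)$, this yields the master identity
\begin{equation*}
\partial_t \bar F_N - 2(\partial_h \bar F_N)^2 = \tfrac{1}{N}\,\partial_h^2 \bar F_N + \tfrac{1}{N^2}\,\var_\P(\la A\ra) - \tfrac{1}{2N^2}\,\cov_\P(\la A\ra, \la x\cdot x'\ra).
\end{equation*}
The coefficient of $\partial_h^2 \bar F_N$ is $\tfrac{1}{N}$, safely below the target $\tfrac{2}{N}$ provided $\partial_h^2 \bar F_N \geq 0$, which is standard on the Nishimori line.

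\smallskip

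\emph{Step 3: controlling the quenched terms.} From Step 1, $N\partial_h F_N = \tfrac{1}{2\sqrt h}\la z\cdot x\ra + \la A\ra - \tfrac 1 2 \la|x|^2\ra$, so
\begin{equation*}
\la A\ra - \E\la A\ra = N(\partial_h F_N - \partial_h \bar F_N) - \tfrac{1}{2\sqrt h}\bigl(\la z\cdot x\ra - \E\la z\cdot x\ra\bigr) + \tfrac 1 2\bigl(\la |x|^2\ra - \E\la |x|^2\ra\bigr).
\end{equation*}
A weighted Cauchy--Schwarz (with parameter tuned so that the coefficient of $\E(\partial_h F_N - \partial_h \bar F_N)^2$ matches the required value $2$), together with $|\cov_\P(\la A\ra, \la x\cdot x'\ra)| \leq \tfrac 1 2\var_\P(\la A\ra) + \tfrac 1 2\var_\P(\la x\cdot x'\ra)$ applied to the covariance in the master identity, bounds the right-hand side by the target expression provided
\begin{equation*}
\var_\P(\la z\cdot x\ra), \;\var_\P(\la |x|^2\ra), \;\var_\P(\la x\cdot x'\ra) = O(N), \quad \text{uniformly in } h \geq 0.
\end{equation*}
These concentration estimates follow from Gaussian Poincar\'e (the prior $P$ being compactly supported). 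The main technical difficulty lies in bounding $\var_\P(\la z\cdot x\ra)$: since $\partial_{z_i}\la z\cdot x\ra = \la x_i\ra + \sqrt h\,\cov_\la(x_i, z\cdot x)$, naive Poincar\'e produces a self-referential $\sqrt h$ term that must be unwound by a further GIBP, and it is this delicate treatment that ultimately produces the $\tfrac{1}{\sqrt h}$ contribution to the error.
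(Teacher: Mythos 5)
Your Steps 1 and 2 are correct, and the identity $\E[\var_\la(A)] = 2N\,\partial_h^2 \bar F_N + \E[\cov_\la(A,x\cdot x')]$ (with $A = x\cdot\bar x$), obtained by differentiating $\partial_h \bar F_N = \tfrac{1}{2N}\E\la A\ra$ once more and integrating by parts in $z$, is a clean alternative to the paper's route through $\partial_h^2 F_N$. The gap is in Step 3. The bounds $\var(\la z\cdot x\ra),\ \var(\la |x|^2\ra),\ \var(\la x\cdot x'\ra) = O(N)$ uniformly in $(t,h)$ are asserted, not proved, and they do not follow from the Gaussian Poincar\'e inequality: e.g.\ for $\la x\cdot x'\ra = |\la x\ra|^2$ one gets $\partial_{W_{ij}}|\la x\ra|^2 = 2\sqrt{t/N}\,\cov_\la(\la x\ra\cdot x,\,x_i x_j)$, and summing the squares of these thermal covariances over the $N^2$ pairs $(i,j)$ gives $O(N^3)$ without a genuine decorrelation input. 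Worse, these estimates are essentially equivalent to pointwise concentration of the overlap, i.e.\ to $\var(\la A\ra) = O(N)$ itself --- which is exactly the statement the proposition is designed \emph{not} to require: the term $\E[(\partial_h F_N - \partial_h\bar F_N)^2]$ is kept explicitly on the right side precisely because such pointwise disorder concentration of extensive thermal averages can fail (at a first-order transition on the Nishimori line, $\frac1N|\la x\ra|^2$ is bimodal over the disorder and these variances are $\Theta(N^2)$); it is only recovered later, in the proof of the theorem, by averaging over a small window in $h$. If you could prove your three $O(N)$ bounds you would have proved something strictly stronger than the proposition, making its concentration term superfluous.

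The paper avoids this trap by never separating the disorder fluctuations of $\la z\cdot x\ra$ and $\la|x|^2\ra$ individually. It compares the \emph{total} second moments: an exact Gaussian integration by parts plus Nishimori computation gives
\begin{equation*}
\E \la H_N'(h,x)^2 \ra = \tfrac 3 4 \E \la A^2 \ra - \tfrac 1 2 \E \la A\,(x \cdot x') \ra + \tfrac 1 {4\sqrt{h}} \E \la |x|^2 \ra ,
\end{equation*}
whence $\E\la A^2\ra \le 4\,\E\la H_N'^2\ra + CN h^{-1/2}$ by Cauchy--Schwarz and Nishimori. Only then is the variance of $H_N'$ split into its thermal part, which equals $N\partial_h^2\bar F_N + O(Nh^{-1})$ exactly, and its disorder part, which \emph{is} $N^2\,\E[(\partial_h F_N - \partial_h\bar F_N)^2]$ and is carried along unestimated. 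To repair your argument you would need to replace Step 3 by a comparison of this type, rather than by separate concentration estimates for each thermal average.
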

In Proposition~\ref{p.approx.hj}, we kept the variables $(t,h)$ implicit for notational convenience. A more precise statement would be that for every $(t,h) \in [0,\infty)^2$, we have
\begin{equation*}  
0 \le \partial_t \bar F_N(t,h) - 2 \Ll( \partial_h\bar F_N(t,h) \Rr) ^2 \le \cdots
\end{equation*}
The right side of this inequality is interpreted as $+\infty$ when $h = 0$.
\smallskip

The next section is devoted to the proof of Proposition~\ref{p.approx.hj}. We will also give some basic estimates on the derivatives of $F_N$ and show that $F_N$ is concentrated around its expectation $\bar F_N$. Section~\ref{s.convergence} starts with the definitions relevant to the notion of viscosity solutions. We then prove Theorem~\ref{t.hj} using the results of Section~\ref{s.approx.hj}. The argument is similar to more standard situations for vanishing viscosity limits, although some additional difficulties appear. We close the section by discussing a variational representation for $f$ given by the Hopf-Lax formula. A generalization to tensors of arbitrary order is then obtained in Section~\ref{s.tensor}. In order to make the paper fully self-contained, two appendices are included. In Appendix~\ref{s.nishi}, we recall the proof of the Nishimori identity, which is a property of inference problems and is the main technical mechanism that allows to ``close the equation'' and remain in the replica-symmetric phase. In Appendix~\ref{s.visc}, we prove the comparison principle and the Hopf-Lax formula for viscosity solutions of~\eqref{e.hj}.

\section{Approximate Hamilton-Jacobi equation and basic estimates}
\label{s.approx.hj}
The main purpose of this section is to prove Proposition~\ref{p.approx.hj}. We will also record basic estimates on the derivatives of the free energy and its concentration properties that will be useful in the next section.

\smallskip

We denote by $\la \cdot \ra$ the Gibbs measure associated with the energy~$H_N(t,{h},\cdot)$. That is, for each bounded measurable function $f : \R^N \to \R$, we set
\begin{equation}  
\label{e.def.Gibbs}
\la f(x) \ra := \frac{1}{Z_N(t,{h})} \int_{\R^N} f(x) e^{H_N(t,{h},x)} \, \d P_N(x),
\end{equation}
where 
\begin{equation*}  
Z_N(t,{h}) := \int_{\R^N} e^{H_N(t,{h},x)} \, \d P_N(x).
\end{equation*}
Note that although the notation does not display it, this random probability measure depends on $t, {h}$, as well as on the realization of the random variables~$\bar x$, $W$ and $z$. 
We will also consider ``replicated'' (or tensorized) versions of this measure, and write $x$, $x'$, $x''$, etc.\ for the canonical ``replicated'' random variables. Conditionally on $\bar x$, $W$ and $z$, these random variables are independent and each is distributed according to the Gibbs measure $\la \cdot \ra$. Abusing notation slightly, we still denote this tensorized measure by $\la \cdot \ra$. An important ingredient for the proof of Proposition~\ref{p.approx.hj} is the Nishimori identity, which is a feature of inference problems whose proof is recalled in Appendix~\ref{s.nishi} below. For simplicity of notation, we only state this identity in the case of two or three replicas, since this will be sufficient for our purpose: for every bounded measurable function $f: \R^N \times \R^N \to \R$, we have
\begin{equation}  
\label{e.nishi}
\E \la f(x,x') \ra = \E \la f(x,\bar x) \ra,
\end{equation}
and for every bounded measurable function $f : \R^N \times \R^N \times \R^N \to \R$,
\begin{equation}  
\label{e.nishi2}
\E \la f(x,x',x'') \ra = \E \la f(x,x',\bar x)\ra.
\end{equation}

\begin{proof}[Proof of Proposition~\ref{p.approx.hj}]
We decompose the proof into three steps.

\smallskip

\emph{Step 1.} In this step, we compute the first derivatives of $\bar F_N$. Starting with the derivative with respect to $t$, we have
\begin{equation}  
\label{e.formula.partialt}
\partial_t  F_N(t,{h}) = \frac 1 N\la \frac {1} {2\sqrt {t N}} \,  x \cdot W x +  \frac 1 N (x\cdot \bar x)^2 - \frac 1 {2N} |x|^4 \ra.
\end{equation}
By Gaussian integration by parts, we have for every $i,j \in \{1,\ldots, N\}$ that
\begin{align*}  
\E\la W_{ij} x_i x_j \ra 
& = \E \Ll[ \partial_{W_{ij}} \la x_i x_j \ra \Rr] 
 = \sqrt \frac{t}{N} \, \E \la x_i^2 x_j^2 - x_i x_j x_i' x_j' \ra ,
\end{align*}
and thus, taking the expectation in \eqref{e.formula.partialt}, we get
\begin{equation*}  
\partial_t \bar F_N(t,{h}) = \frac {1}{2N^2} \E\la  -  (x \cdot x')^2 +  2 (x\cdot \bar x)^2\ra.
\end{equation*}
Using also the Nishimori identity \eqref{e.nishi}, we conclude that
\begin{equation}  
\label{e.formula.partialtbar}
\partial_t \bar F_N(t,{h}) = \frac 1 {2N^2} \E \la (x \cdot \bar x)^2 \ra.
\end{equation}
Similarly, since
\begin{align} 
\label{e.ibp.z}
\E \la z_i x_i \ra & = \E \Ll[ \partial_{z_i} \la x_i \ra \Rr] 
 = \sqrt{h} \, \E \la x_i^2 - x_i x_i' \ra ,
\end{align}
we have
\begin{align}  
\label{e.formula.partialmu1}
\partial_{h} \bar F_N(t,{h}) & = \frac 1 N \E \la \frac 1 {2\sqrt {h}} \, z \cdot x + x \cdot \bar x - \frac 1 2 |x|^2 \ra 
\\
\notag
& = \frac 1 N \E \la \frac {-1} 2 \, x \cdot x' + x \cdot \bar x \ra
\\
\label{e.formula.partialmu2}
& = \frac 1 {2N} \E \la x \cdot \bar x \ra.
\end{align}
We thus deduce that
\begin{equation}  
\label{e.variance.identity}
\partial_t \bar F_N - 2 (\partial_{h} \bar F_N)^2  = \frac 1 {2N^2} \E \la \Ll( x\cdot \bar x - \E\la x\cdot \bar x \ra \Rr)^2 \ra.
\end{equation}
In particular, this quantity is non-negative. 
Note also that
\begin{equation} 
 \label{e.positivity}
\partial_h \bar F_N(t,h) = \frac 1 {2N} \E \la x \cdot x' \ra = \frac{1}{2N} \E \Ll[ | \la x \ra|^2 \Rr] \ge 0,
\end{equation}
so that property \eqref{e.boundary} holds.

\smallskip

\emph{Step 2.} 
In the remaining two steps, we will control the right side of \eqref{e.variance.identity} in terms of the quantitites $\partial_h^2 \bar F_N$ and $\E \Ll[ (\partial_h F_N - \partial_h \bar F_N)^2 \Rr]$. In this step, we show that these quantities allow for a control of the fluctuations of 
\begin{equation*}  
H_N'(h,x) := \frac 1 {2\sqrt {h}} \, z \cdot x + x \cdot \bar x - \frac 1 2 |x|^2.
\end{equation*}
More precisely, we show that
\begin{multline}
\label{e.control.fluctH}
 \E \la \big(  H_N'(h,x) - \E \la H_N'(h,x) \ra \big) ^2 \ra 
\\
\le N \partial_h^2 \bar F_N(t,h) + N^2 \, \E \Ll[ \Ll( \partial_h F_N(t,h) - \partial_h \bar F_N(t,h) \Rr) ^2 \Rr] + C N h^{-1} .
\end{multline}
Our starting point is the variance decomposition
\begin{multline*}  
 \E \la \big(  H_N'(h,x) - \E \la H_N'(h,x) \ra \big) ^2 \ra 
\\
= \E \la \Ll( H_N'(h,x) - \la H_N'(h,x) \ra\Rr)^2 \ra + \E \Ll[\Ll( \la H_N'(h,x) \ra - \E \la H_N'(h,x) \ra\Rr)^2 \Rr] 
.
\end{multline*}
Since
\begin{equation}  
\label{e.formula.partialhF}
\partial_{h} F_N(t,{h}) = \frac 1 N \la H_N'({h},x) \ra ,
\end{equation}
and  $\bar F_N = \E [F_N]$, we readily have that
\begin{equation*}  
\E \Ll[\Ll( \la H_N'(h,x) \ra - \E \la H_N'(h,x) \ra\Rr)^2 \Rr] = N^2 \, \E \Ll[ \Ll( \partial_h F_N(t,h) - \partial_h \bar F_N(t,h) \Rr) ^2 \Rr].
\end{equation*}
We also have that
\begin{equation}
\label{e.partialh2F}
 \partial_{h}^2  F_N(t,{h})  = \frac{1}{N} \Ll(\la \Ll(H_N'({h},x)\Rr)^2 \ra -\la H_N'({h},x) \ra^2 \Rr) - \frac {1}{4N{h}^\frac 3 2} \la z \cdot x \ra ,
\end{equation}
and thus, taking expectations and using \eqref{e.ibp.z}, we get
\begin{equation*} 
\partial_{h}^2 \bar F_N(t,{h})
 = \frac{1}{N} \E \Ll[\la \Ll(H_N'({h},x)\Rr)^2 \ra -\la H_N'({h},x) \ra^2 \Rr] - \frac 1 {4N {h}} \E \la |x|^2 - x \cdot x' \ra.
\end{equation*}
Recall that we assume that the measure $P$ has bounded support. This implies that the last term in the display above is bounded by~$C h^{-1}$, and thus yields~\eqref{e.control.fluctH}.

\smallskip

\emph{Step 3.} In order to conclude, there remains to show that the variance of~$x \cdot \bar x$ is controlled by that of $H_N'(h,x)$. We show that
\begin{equation}
\label{e.relat.fluct}
  \E \la \Ll( x\cdot \bar x - \E\la x\cdot \bar x \ra \Rr)^2 \ra  \le  4 \E \la \big(  H_N'(h,x) - \E \la H_N'(h,x) \ra \big) ^2 \ra + \frac{CN}{\sqrt{h}}.
\end{equation}
In view of \eqref{e.formula.partialmu1} and \eqref{e.formula.partialmu2}, it suffices to show that
\begin{equation*}  
 \E \la \Ll( x\cdot \bar x \Rr)^2 \ra  \le  4 \E \la  H_N'(h,x) ^2 \ra  + \frac{CN}{\sqrt{h}}.
\end{equation*}
For every $i \neq j$, we have, using Gaussian integration by parts and the Nishimori identity,
\begin{align*}  
\E \la z_i z_j x_i x_j \ra 
& = \sqrt{h} \, \E \la z_j x_i x_j(x_i - x_i')\ra 
\\
& = h \, \E \la x_i x_j(x_i - x_i')(x_j + x_j' -2 \bar x_j) \ra,
\end{align*}
while for $i = j$,
\begin{align*}  
\E \la z_i^2 x_i^2\ra 
& = \sqrt{h} \, \E \Ll[ \la z_i x_i^2(x_i - x_i') \ra + \la x_i^2 \ra \Rr] 
\\
& = h \, \E \la x_i^2(x_i - x_i') (x_i + x_i' - 2 \bar x_i) \ra + \sqrt{h} \E \la x_i^2 \ra.
\end{align*}
As a consequence,
\begin{multline*}  
\E \la \Ll( \frac 1 {2\sqrt{h}} \, z \cdot x \Rr) ^2 \ra 
\\
= \frac 1 4 \Ll(\E \la |x|^4 - 2  |x|^2(x\cdot \bar x) -  (x\cdot \bar x)^2 + 2 (x\cdot \bar x)(x \cdot x')\ra+ \frac{1}{\sqrt{h}} \,  \E \la |x|^2 \ra\Rr).
\end{multline*}
Similarly,
\begin{multline*}  
\E \la \frac{2}{2\sqrt{h}} z\cdot x \Ll( x \cdot \bar x - \frac 1 2 |x|^2 \Rr) \ra 
\\
=   \E \la |x|^2 \Ll(  x \cdot \bar x - \frac 1 2 |x|^2 \Rr) \ra - \E \la (x\cdot x') \Ll(  x \cdot \bar x - \frac 1 2 |x|^2 \Rr) \ra.
\end{multline*}
We therefore obtain that 
\begin{align*}  
\notag
\E \la H_N'(h,x)^2 \ra & = 
\E \la \Ll( \frac 1 {2\sqrt{h}} \, z \cdot x \Rr) ^2 \ra 
+ 
\E \la \frac{2}{2\sqrt{h}} z\cdot x \Ll( x \cdot \bar x - \frac 1 2 |x|^2 \Rr) \ra 
\\
\notag
& \qquad
+
\E \la (x \cdot \bar x)^2 \ra +  \frac 1 4 \E \la |x|^4 \ra - \E \la (x\cdot \bar x)|x|^2 \ra
\\
& = \frac 3 4 \E \la (x\cdot \bar x)^2 \ra - \frac 1 2 \E \la (x \cdot \bar x)(x \cdot x') \ra + \frac 1 {4\sqrt{h}} \E \la |x|^2 \ra.
\end{align*}
By the Cauchy-Schwarz inequality and the Nishimori identity, we have 
\begin{equation*}  
\Ll| \E \la (x \cdot \bar x)(x \cdot x') \ra \Rr|  \le \E \la (x\cdot \bar x)^2 \ra,
\end{equation*}
and thus \eqref{e.relat.fluct} is proved. 
\end{proof}

Before turning to the proof of Theorem~\ref{t.hj}, we record simple derivative and concentration estimates in the next two lemmas. We use the notation
\begin{equation*}  
|W| := \sup \{|W x|  \ : \ x \in \R^N, \ |x|\le 1\}.
\end{equation*}
Of course, this quantity depends on $N$, and as we will see in the proof of Lemma~\ref{l.concentration}, it grows like~$\sqrt{N}$. The notation may be slightly misleading, in that it does not display the dependency on $N$. A similar convention is already in place when we write~$|x|$ to denote the $\ell^2$ norm of the vector $x \in \R^N$, a quantity which is typically of the order of $\sqrt{N}$.

\begin{lemma}[Derivative estimates]
\label{l.gradient}
There exists a constant $C < \infty$ such that the following estimates hold uniformly over $[0,\infty)^2$:
\begin{equation}  
\label{e.gradest.barF}
|\partial_t \bar F_N| + |\partial_h \bar F_N|  
\le C,
\end{equation}
\begin{equation}  
\label{e.gradest.Ft}
|\partial_t F_N| \le C + \frac{C|W|}{\sqrt{Nt}},
\end{equation}
\begin{equation}  
\label{e.gradest.F}
|\partial_h F_N| \le C + \frac{C|z|}{\sqrt{Nh}},
\end{equation}
\begin{equation}  
\label{e.conv.F}
\partial_h^2 F_N \ge - \frac{C|z|}{N^\frac 1 2 h^\frac 3 2}.
\end{equation}
\end{lemma}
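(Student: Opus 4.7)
The plan is to read all four bounds directly off the derivative formulas for $F_N$ already computed in the proof of Proposition~\ref{p.approx.hj}, combined with the standing assumption that the prior measure $P$ has bounded support. The latter guarantees that the Gibbs measure $\la \cdot \ra$ is concentrated on a set of the form $\{x\in\R^N : |x_i|\le C_0\}$, and in particular that $|x|\le C_0\sqrt N$, $|x\cdot\bar x|\le C_0^2 N$, and $|x|^k$ is at most of order $N^{k/2}$ under $\la\cdot\ra$; the same pointwise bound $|\bar x|\le C_0\sqrt N$ also holds $\P$-almost surely.

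For \eqref{e.gradest.barF}, I would apply \eqref{e.formula.partialtbar} and \eqref{e.formula.partialmu2} and plug in $|x\cdot\bar x|\le C_0^2 N$, which immediately yields $|\partial_t \bar F_N|\le C_0^4/2$ and $|\partial_h \bar F_N|\le C_0^2/2$. For \eqref{e.gradest.Ft}, I would use \eqref{e.formula.partialt}: the summands $\frac 1 N(x\cdot\bar x)^2$ and $\frac 1 {2N}|x|^4$ are at most of order $N$, so after the overall factor $N^{-1}$ they contribute $O(1)$, while by definition of $|W|$ one has $|x\cdot Wx|\le |W||x|^2\le C_0^2|W|N$, producing the stated $\frac{C|W|}{\sqrt{Nt}}$ term. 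Estimate \eqref{e.gradest.F} follows in the same way from \eqref{e.formula.partialhF} and the explicit expression of $H_N'(h,x)$: the deterministic part $x\cdot\bar x - \frac 1 2 |x|^2$ is of order $N$ under $\la\cdot\ra$, and Cauchy--Schwarz gives $|z\cdot x|\le |z|\,|x|\le C_0|z|\sqrt N$, so $\frac{1}{2N\sqrt h}|\la z\cdot x\ra|$ contributes the $\frac{C|z|}{\sqrt{Nh}}$ term.

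For \eqref{e.conv.F}, I would invoke the representation \eqref{e.partialh2F}. The first group $\frac 1 N(\la(H_N'(h,x))^2\ra-\la H_N'(h,x)\ra^2)$ is the conditional variance of $H_N'(h,\cdot)$ under the Gibbs measure and is therefore non-negative, so it does not contribute to any lower bound. Only the residual term $-\frac{1}{4Nh^{3/2}}\la z\cdot x\ra$ can be negative, and Cauchy--Schwarz once more gives $|\la z\cdot x\ra|\le |z|\la|x|\ra\le C_0|z|\sqrt N$, which yields the claimed lower bound $-\frac{C|z|}{N^{1/2}h^{3/2}}$.

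The main obstacle is essentially bookkeeping rather than anything conceptual: each of the four estimates is a direct consequence of an identity already proved in Section~\ref{s.approx.hj} together with the bounded-support hypothesis on $P$. The only care required is to track powers of $N$ correctly when converting the $\ell^2$-norm bounds $|x|,|z|=O(\sqrt N)$ into the stated expressions, and to notice that \eqref{e.partialh2F} already isolates the possibly negative contribution to $\partial_h^2 F_N$ as a single explicit term, so that no further manipulation is needed for \eqref{e.conv.F}.
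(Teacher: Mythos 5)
Your proposal is correct and follows exactly the route the paper takes: the paper's own proof simply declares the four bounds to be immediate consequences of \eqref{e.formula.partialtbar}--\eqref{e.formula.partialmu2}, \eqref{e.formula.partialt}, \eqref{e.formula.partialhF}, and \eqref{e.partialh2F} together with the bounded support of $P$, and you have supplied precisely the bookkeeping (the bounds $|x|\le C\sqrt N$, $|x\cdot\bar x|\le CN$, Cauchy--Schwarz for $z\cdot x$ and the operator norm for $x\cdot Wx$, and the nonnegativity of the Gibbs variance term in \eqref{e.partialh2F}) that the paper leaves implicit.
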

\begin{proof}
Recall that the measure $P$ has bounded support. The estimates in  \eqref{e.gradest.barF}, \eqref{e.gradest.Ft}, \eqref{e.gradest.F}, and \eqref{e.conv.F} are thus immediate consequences of \eqref{e.formula.partialtbar}-\eqref{e.formula.partialmu2}, \eqref{e.formula.partialt}, \eqref{e.formula.partialhF}, and \eqref{e.partialh2F} respectively.
\end{proof}

We now turn to a concentration estimate. Since this is sufficient for our purposes, we simply state an $L^2$ bound in the probability space, and prove it using the elementary Efron-Stein inequality. The statement could be strengthened to a Gaussian-type integrability using concentration results such as \cite[Theorem~5.5 and Theorem~2.8]{blm} (and this also allows to improve the rate of decay to $0$ as $N$ tends to infinity).
\begin{lemma}[Concentration of free energy]
\label{l.concentration}
There exists $C < \infty$ such that for every $M \ge 1$ and $N \in \N$,
\begin{equation*}  
\E \Ll[ \sup_{[0,M]^2} \Ll( F_N - \bar F_N \Rr)^2  \Rr] \le C M^\frac 4 3 N^{-\frac 1 3}.
\end{equation*}
\end{lemma}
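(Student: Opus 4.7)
The plan is a standard combination of pointwise variance control with a grid argument, the crux being the handling of the $t=0$ and $h=0$ boundaries where the gradient of $F_N$ blows up.

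\emph{Pointwise concentration.} For fixed $(t,h)\in[0,M]^2$ I would use tensorization of variance over the three independent blocks $W$, $z$ and $\bar x$. Since $P$ has bounded support, $|x_i|\le C$ under the Gibbs measure, hence $|x|\le C\sqrt N$. From $\partial_{W_{ij}}F_N = N^{-3/2}\sqrt{t}\,\la x_i x_j\ra$ and $\partial_{z_i}F_N = N^{-1}\sqrt{h}\,\la x_i\ra$ the Gaussian Poincaré inequality gives
\begin{equation*}
\sum_{ij}(\partial_{W_{ij}}F_N)^2 \le \frac{t}{N^3}\,\la |x|^4\ra \le \frac{Ct}{N}, \qquad \sum_i (\partial_{z_i}F_N)^2 \le \frac{Ch}{N}.
\end{equation*}
For $\bar x$ I would invoke Efron-Stein: replacing $\bar x_i$ by an independent copy changes $H_N(t,h,\cdot)$ uniformly in $x$ by at most $C(t+h)$, so $|F_N-F_N^{(i)}|\le C(t+h)/N$ and the $\bar x$ block contributes at most $CM^2/N$ to the variance. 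Combining the three, one obtains the pointwise bound $\E[(F_N(t,h)-\bar F_N(t,h))^2]\le CM^2/N$, uniformly on $[0,M]^2$.

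\emph{Uniform oscillation on cells.} For a cell $Q=[t_0,t_0+\delta]\times[h_0,h_0+\delta]\subset[0,M]^2$, I would integrate the bounds of Lemma~\ref{l.gradient}:
\begin{equation*}
\osc_Q F_N \le \int_{t_0}^{t_0+\delta}\Ll(C+\frac{C|W|}{\sqrt{Ns}}\Rr)\d s + \int_{h_0}^{h_0+\delta}\Ll(C+\frac{C|z|}{\sqrt{Nu}}\Rr)\d u \le C\delta + \frac{C(|W|+|z|)\sqrt{\delta}}{\sqrt N},
\end{equation*}
using $\sqrt{a+\delta}-\sqrt a\le \sqrt\delta$, which is the key point making this estimate uniform over the location of the cell. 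Combined with $|\partial_t\bar F_N|+|\partial_h\bar F_N|\le C$, this yields a bound on $\osc_Q(F_N-\bar F_N)$ that does not depend on $Q$, with $L^2$ norm squared at most $C\delta^2+C\delta\,\E[(|W|+|z|)^2]/N \le C(\delta^2+\delta)$, using $\E[|z|^2]=N$ and $\E[|W|^2]\le CN$ for the operator norm of an $N\times N$ iid standard Gaussian matrix.

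\emph{Grid argument and optimization.} Partition $[0,M]^2$ into at most $(M/\delta+1)^2\le 4M^2/\delta^2$ cells of side $\delta$, and let $G$ be the set of corners. Then
\begin{equation*}
\sup_{[0,M]^2}(F_N-\bar F_N)^2 \le 2\max_{G}(F_N-\bar F_N)^2 + 2\Ll(\max_{\text{cells}}\osc(F_N-\bar F_N)\Rr)^2.
\end{equation*}
Taking expectations, summing the pointwise variance over $G$ and using the uniform oscillation bound gives $\E[\sup(F_N-\bar F_N)^2]\le CM^4/(\delta^2 N) + C(\delta^2+\delta)$. Choosing $\delta = M^{4/3}N^{-1/3}$ balances the first and last contributions and yields the announced $CM^{4/3}N^{-1/3}$ bound.

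The main obstacle is really the second step: the singularities of $\partial_t F_N$ and $\partial_h F_N$ at $\{t=0\}$ and $\{h=0\}$ prevent a plain Lipschitz bound of the form $\osc_Q\le \delta\,\mathrm{Lip}$. Their square-root integrability rescues the argument, but only at the cost of an oscillation of order $\sqrt\delta$ rather than $\delta$; this is precisely what degrades the discretization-concentration trade-off into the exponent $1/3$ appearing in the statement.
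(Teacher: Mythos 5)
Your proposal is correct and follows essentially the same route as the paper: Efron--Stein/Gaussian Poincar\'e for the pointwise variance bound $CM^2/N$, a H\"older-$\tfrac12$ oscillation estimate obtained by integrating the singular gradient bounds of Lemma~\ref{l.gradient}, and a grid argument optimized at mesh $\delta = M^{4/3}N^{-1/3}$. The only cosmetic differences are that you treat the $\bar x$-block by direct replacement rather than via the bound on $\partial_{\bar x_i}F_N$, and that you quote $\E[|W|^2]\le CN$ as standard where the paper proves it by an $\eps$-net.
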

\begin{proof}
We recall that $\bar F_N$ is the expectation of $F_N$ with respect to the variables $\bar x$, $W$ and $z$. The Efron-Stein inequality gives us that
\begin{align*}  
\E \Ll[ \Ll( F_N - \bar F_N \Rr)^2  \Rr] 
& \le \sum_{1\le i,j \le N} \E \Ll[ \Ll(F_N - \E[F_N \ | \ W_{ij}]\Rr)^2 \Rr] 
\\
& \qquad + \sum_{1\le i \le N} \E \Ll[ \Ll(F_N - \E[F_N \ | \ z_{i}]\Rr)^2 \Rr] 
\\
& \qquad + \sum_{1\le i \le N} \E \Ll[ \Ll(F_N - \E[F_N \ | \ \bar x_{i}]\Rr)^2 \Rr] .
\end{align*}
By the Gaussian Poincar\'e inequality (see e.g.\ \cite[(2.5)]{chatterjee} or \cite{poincare}), we have 
\begin{equation*}  
\E \Ll[ \Ll(F_N - \E[F_N \ | \ W_{ij}]\Rr)^2 \Rr] \le \E \Ll[ \Ll( \partial_{W_{ij}} F_N \Rr) ^2 \Rr] ,
\end{equation*}
and 
\begin{equation*}  
\E \Ll[ \Ll(F_N - \E[F_N \ | \ z_{i}]\Rr)^2 \Rr] \le \E \Ll[ \Ll( \partial_{z_i} F_N \Rr) ^2 \Rr] .
\end{equation*}
Moreover,
\begin{equation*}  
\partial_{W_{ij}} F_N = t^\frac 1 2 \, N^{-\frac 3 2} \la x_i x_j \ra,
\end{equation*}
and
\begin{equation*}  
\partial_{z_i} F_N = h^\frac 1 2 \, N^{-1} \, \la x_i \ra,
\end{equation*}
so that 
\begin{equation*}  
\sum_{1\le i,j \le N} \E \Ll[ \Ll(F_N - \E[F_N \ | \ W_{ij}]\Rr)^2 \Rr]  + 
 \sum_{1\le i \le N} \E \Ll[ \Ll(F_N - \E[F_N \ | \ z_{i}]\Rr)^2 \Rr] 
 \le \frac{C (t+h)}{N}.
\end{equation*}
Since
\begin{equation*}  
\partial_{\bar x_i} F_N = \frac 1 N \la \frac {2t} N x_i \Ll( x \cdot \bar x \Rr)  + hx_i\ra  
\end{equation*}
is bounded by $C(t+h)/N$, and the support of the law of $\bar x_i$ is bounded, we also have that 
\begin{equation*}  
\sum_{1\le i \le N} \E \Ll[ \Ll(F_N - \E[F_N \ | \ \bar x_{i}] \Rr)^2 \Rr]\le \frac{C}N(t+h)^2.  
\end{equation*}
We have thus shown that there exists $C < \infty$ such that for every $(t,h) \in [0,\infty)^2$,
\begin{equation}  
\label{e.pointwise}
\E \Ll[ \Ll( F_N - \bar F_N \Rr)^2 (t,h) \Rr] \le \frac{C}{N}(t+t^2 + h+h^2).
\end{equation}
In order to complete the proof, there remains to use a regularity estimate for $F_N - \bar F_N$. By Lemma~\ref{l.gradient}, for every $t,t',h,h' \ge 0$ satisfying 
\begin{equation*}  
|t-t'| + |h-h'| \le 1,
\end{equation*}
we have
\begin{equation*}  
\Ll|F_N(t,h) - F_N(t',h')\Rr| \le C \Ll( 1  + \frac{|W|}{\sqrt{N}} +  \frac{|z|}{\sqrt{N}} \Rr) \Ll( |t-t'|^\frac 1 2 + |h-h'|^\frac 1 2 \Rr) .
\end{equation*}
On the other hand, it is clear from Lemma~\ref{l.gradient} that $\bar F_N$ is uniformly Lipschitz continuous, so in particular the estimate above also holds if $F_N$ is replaced by $\bar F_N$. Hence, for any $\eps \in (0,1]$, if we set
\begin{equation*}  
A_\ep := \ep \N^2 = \{0,\ep, 2\ep,\ldots\}^2,
\end{equation*}
then 
\begin{equation*}  
\sup_{[0,\infty)^2} |F_N - \bar F_N| - \sup_{A_\ep} |F_N - \bar F_N| \le C \Ll( 1  + \frac{|W|}{\sqrt{N}} +  \frac{|z|}{\sqrt{N}} \Rr) \sqrt{\ep}.
\end{equation*}
Moreover, for every $M \in [1,\infty)$, we have by \eqref{e.pointwise} that
\begin{align*}  
\E \Ll[ \sup_{A_\ep \cap [0,M]^2} (F_N - \bar F_N)^2\Rr] 
& \le \sum_{(t,h) \in A_\ep \cap [0,M]^2} \E \Ll[ (F_N - \bar F_N)^2(t,h) \Rr] 
\\
& \le C M^4 \ep^{-2} N^{-1}.
\end{align*}
Combining the two previous displays yields
\begin{equation*}  
\E \Ll[ \sup_{[0,M]^2} (F_N - \bar F_N)^2 \Rr] \le C \ep \E \Ll[  1 + \frac{|W|^2}{{N}} +  \frac{|z|^2}{{N}} \Rr] + \frac{C M^4}{N\ep^2},
\end{equation*}
and we clearly have $\E[|z|^2] = N$. In order to conclude, there remains to verify that $\E[|W|^2] \le C N$ (and then choose $\ep = M^\frac 4 3 N^{-\frac 13}$). For every fixed $x \in \R^N$ satisfying $|x| \le 1$ and every $i \in \{1,\ldots,N\}$, we have that $(Wx)_i$ is a centered Gaussian random variable with variance $|x|^2 \le 1$, and moreover, the random variables $((Wx)_i)_{1\le i \le N}$ are independent. We deduce that there exists $C < \infty$ such that for every $|x| \le 1$,
\begin{equation*}  
\E \Ll[ \exp \Ll( C^{-1} |Wx|^2 \Rr)  \Rr] \le C\exp \Ll( C N \Rr) ,
\end{equation*}
and thus by the Chebyshev inequality, after enlarging $C < \infty$ if necessary, we have that for every $a \ge C$,
\begin{equation*}  
\P \Ll[ |Wx|^2 \ge aN \Rr] \le \exp \Ll( -\frac{aN}{C} \Rr) .
\end{equation*}
Now, let $A \subset \R^N$ be a finite set such that any two points in $A$ are at distance at least $1/2$ from one another, and no point of $\{|x|\le 1\}$ can be added to $A$ without violating this property. By this property of maximality, it must be that for every $x$ satisfying $|x|\le 1$, there exists $y \in A$ such that $|x-y|\le 1/2$. Since for any $x,y \in \R^N$, we have
\begin{equation*}  
|Wx - Wy| \le |W| \, |x-y|,
\end{equation*}
it follows that
\begin{equation*}  
|W| = \sup_{|x|\le 1} |Wx| \le \sup_{x \in A} |Wx| + \frac{|W|}{2},
\end{equation*}
and thus
\begin{equation*}  
|W| \le 2 \sup_{x \in A} |Wx|.
\end{equation*}
In order to construct such a set $A$, we simply pick points in $\{|x|\le 1\}$ in some arbitrary manner, until the maximality property is reached. Note that the balls centered at each of the points in $A$ and of radius $1/4$ are disjoint; they are also contained in the ball of radius $5/4$. Computing the volume of these sets, we infer that $|A| \le 5^N$, and thus, by a union bound, we have for every $a \ge C$ that
\begin{equation*}  
\P \Ll[ |W|^2 \ge aN \Rr] \le 2 \exp \Ll( -N \Ll( \frac a C - \log 5 \Rr)  \Rr).
\end{equation*}
This implies in particular that $\E \Ll[|W|^2  \Rr] \le C N$, as desired. 
\end{proof}

\section{Convergence to viscosity solution}
\label{s.convergence}
The main goal of this section is to show that Proposition~\ref{p.approx.hj} implies Theorem~\ref{t.hj}. We also comment on variational representations for solutions of Hamilton-Jacobi equations at the end of the section. To start with, we recall the definition of viscosity solutions.

\begin{definition}  
\label{def.visc}
We say that a function $f \in C([0,\infty)^2)$ is a \emph{viscosity subsolution} of \eqref{e.hj} if for every $(t,h) \in (0,\infty)\times [0,\infty)$ and $\phi \in C^\infty((0,\infty)\times [0,\infty))$ such that $(t,h)$ is a local maximum of $f-\phi$, we have
\begin{equation*}
\Ll\{
\begin{aligned}  
\Ll(\partial_t \phi - 2(\partial_h \phi)^2\Rr)(t,h) \le 0 & \qquad \text{if } h > 0, \\
\min \Ll( -\partial_h \phi,\partial_t \phi - 2(\partial_h \phi)^2 \Rr)(t,h) \le 0 & \qquad \text{if } h = 0.
\end{aligned}
\Rr.
\end{equation*}
We say that a function $f \in C([0,\infty)^2)$ is a \emph{viscosity supersolution} of \eqref{e.hj} if for every $(t,h) \in (0,\infty)\times [0,\infty)$ and $\phi \in C^\infty((0,\infty)\times [0,\infty))$ such that $(t,h)$ is a local minimum of $f-\phi$, we have
\begin{equation*}
\Ll\{
\begin{aligned}  
\Ll(\partial_t \phi - 2(\partial_h \phi)^2\Rr)(t,h) \ge 0 & \qquad \text{if } h > 0, \\
\max \Ll( -\partial_h \phi , \partial_t \phi - 2(\partial_h \phi)^2\Rr)(t,h) \ge 0 & \qquad \text{if } h = 0.
\end{aligned}
\Rr.
\end{equation*}
We say that a function $f \in C([0,\infty)^2)$ is a \emph{viscosity solution} of \eqref{e.hj} if it is a viscosity sub- and supersolution. We may also say that a function $f \in C([0,\infty)^2)$ is a viscosity solution of 
\begin{equation}  
\label{e.hj.ineq}
\Ll\{
\begin{aligned}
 \partial_t f - 2(\partial_{h} f)^2 & \le 0 \quad && \text{in } (0,+\infty)^2, \\
 - \partial_h f & \le 0 \quad && \text{on }  (0,+\infty) \times \{0\},
\end{aligned}
\Rr.
\end{equation}
if it is a viscosity subsolution of \eqref{e.hj}. Similarly, we may say that a function $f \in C([0,\infty)^2)$ is a viscosity solution of~\eqref{e.hj.ineq} with the inequalities reversed if it is a viscosity supersolution of \eqref{e.hj}.
\end{definition}

The mechanism allowing to identify uniquely the viscosity solution to \eqref{e.hj} subject to appropriate initial condition relies on the following classical comparison principle. 
\begin{proposition}[Comparison principle]
\label{p.comparison}
Let $u$ be a subsolution and $v$ be a supersolution of \eqref{e.hj} such that both $u$ and $v$ are uniformly Lipschitz continuous in the variable $h$.
We have
\begin{equation*}  
\sup_{[0,\infty)^2} (u-v) = \sup_{\{0\}\times [0,\infty)} (u-v).
\end{equation*}
\end{proposition}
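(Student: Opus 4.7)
My plan is to prove Proposition~\ref{p.comparison} by the classical Crandall--Ishii--Lions doubling-of-variables argument, the only non-standard feature being the oblique Neumann-type boundary condition $-\partial_h f = 0$ at $\{h = 0\}$. Suppose for contradiction that $M := \sup_{[0,\infty)^2}(u - v) - \sup_{\{0\}\times[0,\infty)}(u - v) > 0$. Fix $\eta \in (0, M/4)$ and replace $u$ by the strict subsolution $u_\eta(t,h) := u(t,h) - \eta t$: in the interior the inequality improves to $\partial_t\phi - 2(\partial_h\phi)^2 \le -\eta$, and the Neumann boundary condition is unaffected since $\eta t$ has no $h$-derivative. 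Because $u$ and $v$ are uniformly Lipschitz in $h$ with a common constant $L$, a small linear penalty $\gamma(h+k)$ for $\gamma > 0$ renders the doubled functional coercive in $h$ and $k$, and restricting to $t \in [0,T]$ for $T$ large provides the compactness needed to extract a maximizer; the quantitative gap $M > 0$ will ensure the doubled maximizers stay away from $\{t = 0\}$ once $\gamma$ is small enough.

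Next, introduce
$$
\Phi_{\alpha,\gamma}(t,h,s,k) := u_\eta(t,h) - v(s,k) - \frac{\alpha}{2}\bigl((t-s)^2 + (h-k)^2\bigr) - \gamma(h+k)
$$
and extract a maximizer $(t_\alpha, h_\alpha, s_\alpha, k_\alpha)$. The standard penalization lemma yields $\alpha[(t_\alpha - s_\alpha)^2 + (h_\alpha - k_\alpha)^2] \to 0$ as $\alpha \to \infty$. In the interior case $h_\alpha, k_\alpha > 0$, I would apply the subsolution definition to $u_\eta$ with test function $\phi_1(t,h) := \tfrac{\alpha}{2}(t - s_\alpha)^2 + \tfrac{\alpha}{2}(h - k_\alpha)^2 + \gamma h$ and the supersolution definition to $v$ with $\phi_2(s,k) := -\tfrac{\alpha}{2}(t_\alpha - s)^2 - \tfrac{\alpha}{2}(h_\alpha - k)^2 - \gamma k$. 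Writing $\tau := \alpha(t_\alpha - s_\alpha)$ and $p := \alpha(h_\alpha - k_\alpha)$, these read
$$
\tau - 2(p + \gamma)^2 \le -\eta \qquad\text{and}\qquad \tau - 2(p - \gamma)^2 \ge 0.
$$
Subtracting gives $\eta \le 8\gamma p$. The Lipschitz bounds $|p + \gamma| \le L$ and $|p - \gamma| \le L$ force $p \le L - \gamma \le L$, whence $\eta \le 8\gamma L$, contradicting the choice $\gamma < \eta/(8L)$.

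The main obstacle is the boundary case $h_\alpha = 0$ or $k_\alpha = 0$, where the viscosity definitions offer the alternative branches $-\partial_h\phi_1 \le 0$ for the subsolution and $-\partial_k\phi_2 \ge 0$ for the supersolution. A direct computation shows that the subsolution boundary branch at $h_\alpha = 0$ reads $\alpha k_\alpha \le \gamma$, and the supersolution boundary branch at $k_\alpha = 0$ reads $\alpha h_\alpha \le \gamma$. I plan to handle mixed cases as follows: if only one side fires the boundary branch, the PDE inequality on the other side combined with $|p| \le L$ and the near-boundary constraint $\alpha \min(h_\alpha, k_\alpha) \le \gamma$ still yields an $\eta \le C\gamma$ contradiction after the same subtraction; if both sides fire the boundary branch, the maximizers satisfy $h_\alpha, k_\alpha \le \gamma/\alpha \to 0$, so the supremum of $\Phi_{\alpha,\gamma}$ is attained in the limit at a point of $\{h = 0\}$, where I would exploit the strict gap $M > 0$ together with a further auxiliary outward perturbation of the test function to reduce once again to one of the PDE-branch cases. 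Verifying that all these sub-cases close uniformly in $\alpha \to \infty$ and $\gamma \to 0$ follows the standard templates for Neumann-type boundary conditions in the viscosity literature, and I expect it to constitute the bulk of the technical work.
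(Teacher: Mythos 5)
Your proposal follows the same doubling-of-variables template as the paper, but the two devices you rely on do not do the jobs you assign them, and the case you defer is exactly the one that breaks. The linear penalty $-\gamma(h+k)$ has the \emph{wrong sign} for the Neumann boundary at $\{h=0\}$: at $h_\alpha=0$ your test function has $\partial_h\phi_1=-\alpha k_\alpha+\gamma$, so the $+\gamma$ makes the uninformative boundary branch $-\partial_h\phi_1\le 0$ \emph{easier} to fire, not harder; and since the penalty pushes maximizers toward $h=k=0$, the case $h_\alpha=k_\alpha=0$ is entirely plausible, where the two viscosity conditions read $\min(-\gamma,\,\cdot\,)\le 0$ and $\max(\gamma,\,\cdot\,)\ge 0$ and are therefore vacuous — you extract nothing. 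Your proposed rescue via ``the strict gap $M>0$'' confuses the two boundaries: $M$ measures the gap relative to $\{t=0\}$ and gives no information about maximizers landing on $\{h=0\}$, which is a perfectly admissible location for $\sup(u-v)$. The mixed case is also not closed as claimed: if only the subsolution boundary branch fires you learn $|p|\le\gamma$ and retain only the one-sided bound $\tau\ge 2(p-\gamma)^2$ from the supersolution, and ``the same subtraction'' has no upper bound on $\tau$ to work with. The fix — which is what the paper does — is to strictify the \emph{boundary} condition as well as the interior one, by replacing $u$ with $u+\eps h-C\eps t$ and $v$ with $v-\eps h+C\eps t$, so that $-\partial_h u\le-\eps$ and $-\partial_h v\ge\eps$ in the viscosity sense. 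With the purely quadratic penalization one then has, at $h_\alpha=0$, $-\partial_h\phi_1=\alpha k_\alpha\ge 0>-\eps$, so the boundary branch can never fire; the interior inequalities are forced at every contact point and are directly contradictory ($\tau-2p^2\le-\eps$ versus $\tau-2p^2\ge\eps$), with no case analysis.

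There is a second gap in the coercivity step. You need $\gamma<\eta/(8L)$ for the final contradiction, but $u$ and $v$ are only assumed Lipschitz in $h$, so $u_\eta(t,h)-v(s,k)-\gamma(h+k)$ can still tend to $+\infty$ along $h=k\to\infty$ when $\gamma<2L$; the supremum of the doubled functional need not be attained and the argument does not start. (Restricting $t$ to $[0,T]$ gives compactness in $t$ only.) This is precisely why the paper constructs the auxiliary function $\Phi_\de$ of Step~0: it is superlinear for $h\ge 2\de^{-2}$ (yielding coercivity), has gradient $O(\de)$ on any fixed compact set (so it does not destroy the strict subsolution property there), and satisfies the derivative inequality \eqref{e.bound.der} globally (so the perturbed function remains a subsolution for large $h$ as well). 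A single small linear penalty cannot perform both tasks at once.
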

The proof of Proposition~\ref{p.comparison} is given in Appendix~\ref{s.visc}. (Besides the inconvenience that the domain under consideration is unbounded, the proof is classical.) In the statement of Proposition~\ref{p.comparison}, we assume a certain uniform Lipschitz continuity property in the variable $h$. As will be clear from the proof, this assumption can be weakened, and possibly be removed. This assumption is meant to allow for a simpler proof, and is not causing additional difficulties elsewhere since it is very easy to check that our candidate solutions satisfy it.

\smallskip

We are now ready to prove Theorem~\ref{t.hj}.

\begin{proof}[Proof of Theorem~\ref{t.hj}]
By Lemma~\ref{l.concentration}, it suffices to study the convergence of~$\bar F_N$ as $N$ tends to infinity.
Recall that $\bar F_N(h,0) = \psi(h)$ does not depend on $N$. Moreover, it is clear from \eqref{e.formula.partialt} and \eqref{e.formula.partialmu2} that $\bar F_N$ is uniformly Lipschitz in both variables. Hence, by the Arzel\'a-Ascoli theorem, the sequence $(\bar F_N)$ is precompact for the topology of local uniform convergence. Let $f$ be such that~$\bar F_N$ converges to $f$ locally uniformly as $N$ tends to infinity along a subsequence. For notational convenience, we will omit to refer to the particular subsequence along which this convergence holds. Our goal is to show that $f$ is a viscosity solution of \eqref{e.hj}. By the comparison principle (Proposition~\ref{p.comparison}), this would identify $f$ uniquely, and thus prove the theorem. 

\smallskip

We decompose the rest of the proof into six steps.

\smallskip

\emph{Step 1.} We show that $f$ is a viscosity supersolution of \eqref{e.hj}. It is easy to show that in the definition of viscosity supersolution, replacing the phrase ``local minimum'' by ``strict local minimum'' yields an equivalent definition. Let $(t,h) \in (0,\infty)\times [0,\infty)$ and $\phi \in C^\infty((0,\infty)\times [0,\infty))$ be such that $f-\phi$ has a strict local minimum at the point $(t,h)$. Since $\bar F_N$ converges to $f$ locally uniformly, there exists a sequence $(t_N,h_N) \in (0,\infty)\times [0,\infty)$ converging to $(t,h)$ as $N$ tends to infinity and such that $\bar F_N - \phi$ has a strict local minimum at $(t_N,h_N)$. If $h_N > 0$ infinitely often, then along a subsequence on which this property holds, we have that the first derivatives of $\bar F_N$ and $\phi$ at $(t_N,h_N)$ coincide, and thus by Proposition~\ref{p.approx.hj} that 
\begin{equation*}  
\Ll(\partial_t \phi - 2 (\partial_h \phi)^2 \Rr)(t_N,h_N) \ge 0.
\end{equation*}
By continuity, this implies that 
\begin{equation*}  
\Ll(\partial_t \phi - 2 (\partial_h \phi)^2 \Rr)(t,h) \ge 0,
\end{equation*}
as desired. There remains to consider the case when $h_N = 0$ infinitely often. In this case, we must have $h = 0$. We can also assert that 
\begin{equation}  
\label{e.compare.grad.bdry}
\partial_h (\bar F_N - \phi)(t_N,h_N) \ge 0, \qquad \partial_t (\bar F_N - \phi)(t_N,h_N) = 0 ,
\end{equation}
and we recall that, by Proposition~\ref{p.approx.hj},
\begin{equation}  
\label{e.recall.approx.hj}
\Ll(\partial_t \bar F_N - 2 (\partial_h \bar F_N)^2 \Rr)(t_N,h_N) \ge 0.
\end{equation}
If $-\partial_h\phi(t,h) \ge 0$, then there is nothing to show. Otherwise, using the first statement in \eqref{e.compare.grad.bdry}, we find that
\begin{equation*}  
(\partial_h \phi)^2(t,h) = \lim_{N \to \infty} (\partial_h \phi)^2(t_N,h_N) \le \liminf_{N \to \infty} (\partial_h \bar F_N)^2(t_N,h_N),
\end{equation*}
and thus, using also the second statement in \eqref{e.compare.grad.bdry} and \eqref{e.recall.approx.hj},
\begin{equation*}  
\Ll(\partial_t \phi - 2 (\partial_h \phi)^2 \Rr)(t,h) \ge \limsup_{N\to \infty} \Ll(\partial_t \bar F_N - 2 (\partial_h \bar F_N)^2 \Rr)(t_N,h_N) \ge 0.
\end{equation*}
This completes the proof of the fact that $f$ is a supersolution.

\smallskip

\emph{Step 2.} We next show that $f$ is a subsolution of \eqref{e.hj}. In this step, we focus on contact points of the form $(t,0)$; that is, we give ourselves $t > 0$ and $\phi \in C^\infty((0,\infty)\times [0,\infty))$ such that $f-\phi$ has a strict local maximum at the point $(t,0)$. In this case, there exists a sequence $(t_N,h_N) \in (0,\infty)\times [0,\infty)$ converging to $(t,0)$ and such that $\bar F_N - \phi$ has a local maximum at $(t_N,h_N)$. If $h_N = 0$, then we must have that 
\begin{equation*}  
\partial_h (\bar F_N - \phi)(t_N,h_N) \le 0.
\end{equation*}
This inequality still holds, and is in fact an equality, if $h_N > 0$. In view of \eqref{e.boundary}, we thus deduce that 
\begin{equation*}  
-\partial_h \phi(t_N,h_N) \le 0.
\end{equation*}
Letting $N$ tend to infinity, we obtain that $-\partial_h \phi(t,0) \le 0$, as desired.

\smallskip

\emph{Step 3.} We now consider the remaining possible contact points. Let $t, h > 0$ and $\phi \in C^\infty((0,\infty)\times [0,\infty))$ be such that $f-\phi$ has a local maximum at the point $(t,h)$. For the remainder of this proof, we allow the value of the constant $C < \infty$ to change from place to place, and to depend on $t$, $h$, $f$ and $\phi$, without further notice. For convenience, we introduce the notation
\begin{equation}  
\label{e.def.delta}
\de_N := \Ll\| \bar F_N - f \Rr\|_{L^\infty([0,t+1]\times [0,h+1])}^\frac 1 4 + N^{-\frac 1 {12}},
\end{equation}
and
\begin{equation*}  
\td \phi(t',h') := \phi(t,h) + (t-t')^2 + (h-h')^2.
\end{equation*}
We clearly have that $f-\td \phi$ has a strict local maximum at $(t,h)$. We also have that for every $(t',h') \in [0,t+1]\times [0,h+1]$,
\begin{equation*}  
\Ll( \bar F_N - \td \phi \Rr) (t',h') \le (f-\phi)(t',h') - (t-t')^2 - (h-h')^2 + \de_N^4,
\end{equation*}
while
\begin{equation*}  
\Ll( \bar F_N - \td \phi \Rr) (t,h) \ge (f-\phi)(t,h) - \de_N^4.
\end{equation*}

Since $f-\phi$ has a local maximum at $(t,h)$, we infer that for $N$ sufficiently large, the function $\bar F_N - \td \phi$ has a local maximum at $(t_N,h_N)$ satisfying
\begin{equation}
\label{e.bound.tNhN}
(t-t_N)^2 + (h-h_N)^2 \le 2 \de_N^4.
\end{equation}
The point of replacing $\phi$ by $\td \phi$ was precisely to obtain such an explicit estimate. We have
\begin{equation}  
\label{e.identity.FNphi}
\partial_h (\bar F_N - \td \phi)(t_N,h_N) = 0, \qquad \partial_t (\bar F_N - \td \phi)(t_N,h_N) = 0 .
\end{equation}
We next wish to use Proposition~\ref{p.approx.hj} to conclude. However, since the concentration result in Lemma~\ref{l.concentration} applies to $F_N - \bar F_N$ rather than its derivatives in $h$, we will want to take a small local average in the $h$ variable to control the term involving $\partial_h (F_N - \bar F_N)$. In preparation for this, we show in this step that there exists a constant $C < \infty$ such that for every $h' \in \R$ satisfying $|h'-h_N| \le C^{-1}$, we have
\begin{equation}  
\label{e.C11}
\Ll| \bar F_N(t_N,h') - \bar F_N(t_N,h_N) - (h'-h_N)\partial_h \bar F_N(t_N,h_N) \Rr| \le C (h'-h_N)^2.
\end{equation}
We start by writing Taylor's formula
\begin{multline}  
\label{e.Taylor}
\bar F_N(t_N,h') - \bar F_N(t_N,h_N) 
\\
= (h'-h_N) \partial_h \bar F_N(t_N,h_N) + \int_{h_N}^{h'} (h'-h'') \partial_h^2 \bar F_N(t_N,h'') \, \d h''.
\end{multline}
The same identity also holds with $\bar F_N$ replaced by $\td \phi$. Since $\bar F_N - \td \phi$ has a local maximum at $(t_N,h_N)$, and in view of \eqref{e.identity.FNphi}, we get that for $|h'-h_N|\le C^{-1}$,
\begin{equation*}  
\int_{h_N}^{h'} (h'-h'') \partial_h^2 \bar F_N(t_N,h'') \, \d h'' \le \int_{h_N}^{h'} (h'-h'') \partial_h^2 \td \phi(t_N,h'') \, \d h''.
\end{equation*}
Moreover, the integral on the right side is bounded by $C(h'-h_N)^2$, since $\td \phi$ is assumed to be smooth. By \eqref{e.conv.F}, we also have that $\partial_h^2 \bar F_N \ge -C$, and thus
\begin{equation*}  
\int_{h_N}^{h'} (h'-h'') \Ll|\partial_h^2 \bar F_N(t_N,h'')\Rr| \, \d h'' \le C (h'-h_N)^2.
\end{equation*}
Inequality~\eqref{e.C11} then follows using \eqref{e.Taylor} once more.

\smallskip

\emph{Step 4.}
We set
\begin{equation*}  
G_N(t',h') := \de_N^{-1} \int_{h'}^{h'+\delta_N} \bar F_N(t',h'') \, \d h''.
\end{equation*}
It is clear that the function $G_N$ converges to $f$ locally uniformly as $N$ tends to infinity. Hence, there exists a sequence $t_N',h_N' > 0$ such that for every $N$ sufficiently large, the function $G_N - \td \phi$ has a local maximum at $(t_N',h_N')$. Repeating the argument of the previous step, we also obtain that 
\begin{equation}  
\label{e.bound.tNhN'}
(t-t_N')^2 + (h-h_N')^2 \le 2 \de_N^4.
\end{equation}
We note that
\begin{equation}  
\label{e.identity.GNphi}
\partial_h (G_N - \td \phi)(t_N',h_N') = 0, \qquad \partial_t (G_N - \td \phi)(t_N',h_N') = 0 ,
\end{equation}
and
\begin{equation}  
\label{e.ineq.GNphi}
\partial_h^2 (G_N - \td \phi)(t_N',h_N') \le 0 .
\end{equation}
Recall from Proposition~\ref{p.approx.hj} that for every $h' > 0$,
\begin{multline}  
\label{e.remind.p.approx}
\Ll(\partial_t \bar F_N - 2 (\partial_h \bar F_N)^2 \Rr)(t_N',h') 
 \le 
\frac 2 {N}  \partial_{h}^2 \bar F_N(t_N',h') 
\\
+ 2 \E \Ll[ \Ll(\partial_h F_N - \partial_h \bar F_N\Rr)^2(t_N',h') \Rr]  +\frac{C}{N}\Ll(1 + \frac 1 {h'}\Rr).
\end{multline}
In the next two steps, we will show the following estimates:
\begin{equation}
\label{e.error.square}
\int_{h_N'}^{h_N'+ \de_N} \Ll( \partial_h \bar F_N(t_N',h') - \partial_h G_N(t_N',h_N')\Rr)^2  \, \d h' \le C \de_N^2,
\end{equation}
and
\begin{equation}
\label{e.average.concentration}
\int_{h_N'}^{h_N' + \de_N} \E \Ll[ \Ll(\partial_h F_N - \partial_h \bar F_N\Rr)^2(t_N',h') \Rr] \, \d h' \le C \de_N^2.
\end{equation}
For now, we assume that these estimates hold and show how to conclude. Using the fact that $\partial_h \bar F_N$ is bounded, Jensen's inequality, and \eqref{e.error.square}, we obtain that 
\begin{multline}  
\label{e.comp.square}
\Ll|(\partial_h G_N)^2(t_N',h_N') - \de_N^{-1}\int_{h_N'}^{h_N' + \de_N} (\partial_h \bar F_N)^2(t_N',h') \, \d h' \Rr| 
\\
\le C \de_N^{-1} \int_{h_N'}^{h_N'+ \de_N} \Ll| \partial_h \bar F_N(t_N',h') - \partial_h G_N(t_N',h_N')\Rr|  \, \d h'  \le C \sqrt{\de_N}.
\end{multline}
Averaging over $h' \in [h_N',h_N' + \de_N]$ in~\eqref{e.remind.p.approx}, using the estimate above and \eqref{e.average.concentration}, we get
\begin{equation*}  
\Ll(\partial_t G_N - 2 (\partial_h G_N)^2 \Rr)(t_N',h_N') 
 \le 
\frac 2 {N}  \partial_{h}^2 G_N(t_N',h_N') 
   + C \sqrt{\de_N}.
\end{equation*}
Appealing to \eqref{e.identity.GNphi}-\eqref{e.ineq.GNphi}, passing to the limit $N \to \infty$ and recalling that the first derivatives of $\phi$ and $\td \phi$ coincide at $(t,h)$, we conclude that 
\begin{equation}  
\label{e.concl.subsol}
\Ll( \partial_t \phi - 2(\partial_h \phi)^2 \Rr) (t,h) \le 0,
\end{equation}
as desired.

\smallskip

\emph{Step 5.} In order to complete the proof, there remains to show \eqref{e.error.square} and \eqref{e.average.concentration}. In this step, we prove \eqref{e.average.concentration}. The argument relies on the fact that, by integration by parts, we have for any smooth function $g \in C^\infty([a,b],\R)$ that
\begin{align}  
\notag
\|g'\|_{L^2(a,b)}^2
& = g(b)g'(b) - g(a) g'(a) - \int_a^b g g''
\\
\label{e.interpol}
& \le g(b)g'(b) - g(a) g'(a) + \|g\|_{L^\infty(a,b)} \, \|g''\|_{L^1(a,b)}.
\end{align}
Applying \eqref{e.interpol} with $g = F_N - \bar F_N$, using that $\partial_h F_N$ is bounded and the Cauchy-Schwarz inequality, we get that the left side of \eqref{e.average.concentration} is bounded by
\begin{multline*}  
\E \Ll[ \sup_{h' \in [h_N',h_N' + \de_N]} \Ll(F_N(t_N',h') - \bar F_N(t_N',h')\Rr)^2 \Rr] ^\frac 1 2 
\\
\times \Ll(C +  \E \Ll[ \Ll( \int_{h_N'}^{h_N' + \de_N} \Ll|\partial_h^2 (F_N - \bar F_N)(t_N',h') \Rr|\, \d h' \Rr)^2  \Rr] ^\frac 1 2\Rr).
\end{multline*}
By Lemma~\ref{l.concentration}, the first term in this product is bounded by $C N^{-\frac 1 6}$. For the second term, we use \eqref{e.conv.F} to observe that, for the constant $C = C_0$ identified there,
\begin{align*}  
\Ll|\partial_h^2 F_N(t_N',h') \Rr| 
& \le \Ll|\partial_h^2 F_N(t_N',h') + \frac{C_0|z|}{N^\frac 1 2 h^\frac 3 2}\Rr| + \frac{C_0|z|}{N^\frac 1 2 h^\frac 3 2}
\\
& = \partial_h^2 F_N(t_N',h') + \frac{2C_0|z|}{N^\frac 1 2 h^\frac 3 2} ,
\end{align*}
and thus, using again that $\partial_h F_N$ is bounded, we conclude that
\begin{equation*}  
\E \Ll[ \Ll( \int_{h_N'}^{h_N' + \de_N} \Ll|\partial_h^2 F_N(t_N',h') \Rr|\, \d h' \Rr)^2  \Rr]^\frac 1 2  \le C + \frac{C\de_N}{ h^\frac 3 2} \le C. 
\end{equation*}
Since $N^{-\frac 1 6} \le \de_N^2$, this completes the proof of \eqref{e.average.concentration}.

\smallskip

\emph{Step 6.} We now prove \eqref{e.error.square}.
Observe that
\begin{equation*}  
\partial_h G_N(t_N',h_N') = \de_N^{-1} \Ll(\bar F_N(t_N',h_N'+ \de_N) - \bar F_N(t_N',h_N')\Rr).
\end{equation*}
We use \eqref{e.interpol} with $g$ replaced by the function
\begin{equation*}  
g_N : h' \mapsto \bar F_N(t_N',h') - \bar F_N(t_N',h_N') + (h'-h_N')\partial_h G_N(t_N',h_N')
\end{equation*}
to get that
\begin{multline*}  
\int_{h_N'}^{h_N'+ \de_N} \Ll( \partial_h \bar F_N(t_N',h') - \partial_h G_N(t_N',h_N')\Rr)^2  \, \d h'
\\
 \le \|g_N\|_{L^\infty(h_N',h_N' + \de_N)} \,
\int_{h_N'}^{h_N' + \de_N} |\partial_h^2 \bar F_N(t_N',h')| \, \d h'.
\end{multline*}
Using also \eqref{e.conv.F} and \eqref{e.gradest.barF}, we obtain that
\begin{equation*}  
\int_{h_N'}^{h_N'+ \de_N} \Ll( \partial_h \bar F_N(t_N',h') - \partial_h G_N(t_N',h_N')\Rr)^2  \, \d h' \le C \|g_N\|_{L^\infty(h_N',h_N' + \de_N)}.
\end{equation*}
Since $\bar F_N$ is Lipschitz continuous in the variable $t$, we also have that 
\begin{multline*}  
\|g_N\|_{L^\infty(h_N',h_N' + \de_N)} \le C |t_N - t_N'| 
\\
+  \sup_{h' \in [h_N',h_N' + \de_N]} \Ll| \bar F_N(t_N,h') - \bar F_N(t_N,h_N') + (h'-h_N')\partial_h G_N(t_N,h_N') \Rr| .
\end{multline*}
The estimate \eqref{e.error.square} then follows using \eqref{e.bound.tNhN}, \eqref{e.bound.tNhN'} and \eqref{e.C11}.
\end{proof}

We conclude this section with some remarks on variational representations for the function $f$ appearing in Theorem~\ref{t.hj}. Solutions to Hamilton-Jacobi equations of the form $\partial_t f - \mathsf H(\nabla f)$ with convex (resp.\ concave) $\mathsf H$ have a variational representation given by the Hopf-Lax formula, in which the convex (resp.\ concave) dual of $\mathsf H$ appears (see e.g.~\cite[Theorem~10.3.4.3]{evans}). It is usually under this variational presentation that the limit free energy of mean-field statistical mechanics models is identified. In our case, the function~$\mathsf H$ is simply $p \mapsto 2p^2$, whose convex dual is $q \mapsto \frac {q^2}{8}$.
\begin{proposition}[Hopf-Lax formula]
\label{p.hopf-lax}
For every $t \ge 0$ and $h \ge 0$, we set
\begin{equation}  
\label{e.hopf-lax}
f(t,h) := \sup_{h' \ge 0} \Ll( \psi({h}') - \frac{({h} - {h}')^2}{8t} \Rr),
\end{equation}
with the understanding that $f(0,h) = \psi(h)$. The function $f$ is the unique viscosity solution of \eqref{e.hj} that satisfies $f(0,h) = \psi(h)$ and is globally Lipschitz continuous in the variable $h$.
\end{proposition}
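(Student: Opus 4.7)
I would record first that $\psi$ is globally Lipschitz continuous, by Lemma~\ref{l.gradient} applied with $N=1$, and nondecreasing, by~\eqref{e.boundary} with $N=1$. These two properties guarantee that the supremum defining $f(t,h)$ is finite and attained on a bounded set for every $t > 0$, and that $f(t,\cdot)$ is globally Lipschitz with the same constant as $\psi$, uniformly in $t$. The key reduction is to extend $\psi$ to all of $\R$ by setting $\td\psi(h') := \psi(\max(h',0))$, which preserves Lipschitz continuity and monotonicity. Monotonicity makes negative values of $h'$ suboptimal in the unrestricted supremum whenever $h \ge 0$, since for such $h'$ we have $h - h' \ge h \ge 0$, hence $\td\psi(h') - (h-h')^2/(8t) = \td\psi(0) - (h-h')^2/(8t) \le \td\psi(0) - h^2/(8t)$, which is the value attained at $h'=0$. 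Thus for every $(t,h) \in (0,\infty)\times [0,\infty)$,
\begin{equation*}
f(t,h) = \sup_{h' \in \R} \Ll( \td\psi(h') - \frac{(h-h')^2}{8t} \Rr),
\end{equation*}
the classical Hopf-Lax formula on $(0,\infty)\times \R$ for the equation $\partial_t f - 2(\partial_h f)^2 = 0$ with Lipschitz initial datum $\td\psi$.

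Standard arguments (as in \cite[Chapter~10]{evans}) then show that $f$ is a viscosity solution of this equation on $(0,\infty)\times \R$, and in particular on the interior $(0,\infty)^2$, and that $\lim_{t\to 0^+} f(t,h) = \psi(h)$ locally uniformly. The lower bound $f(t,h) \ge \psi(h)$ follows by plugging $h' = h$ into the supremum, while the matching upper bound $f(t,h) \le \psi(h) + 2 C^2 t$, where $C$ is the Lipschitz constant of $\td\psi$, follows from an elementary one-dimensional optimization using the Lipschitz bound $\td\psi(h') \le \psi(h) + C|h - h'|$.

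For the boundary condition on $(0,\infty)\times\{0\}$, I would exploit the fact that $f(t,\cdot)$ is itself nondecreasing: for $h_1 \le h_2$, inserting $h^\ast + (h_2-h_1)$ into the formula for $f(t,h_2)$, where $h^\ast$ achieves the supremum for $f(t,h_1)$, produces a value at least $f(t,h_1)$ by monotonicity of $\td\psi$. This monotonicity forces $\partial_h \phi(t,0) \ge 0$ for any smooth test function $\phi$ against which $f - \phi$ has a local maximum at a boundary point $(t,0)$, which is the subsolution boundary condition of Definition~\ref{def.visc}. The supersolution boundary condition at $h=0$ is inherited from the interior viscosity supersolution property on $(0,\infty)\times\R$, by checking that any smooth test function on $(0,\infty)\times [0,\infty)$ realizing a local minimum of $f-\phi$ at $(t,0)$ can be extended to $h < 0$ in such a way that the local minimum persists in the enlarged domain.

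Uniqueness within the class of viscosity solutions that are globally Lipschitz in $h$ then follows immediately from the comparison principle (Proposition~\ref{p.comparison}), since any two such solutions agreeing with $\psi$ on $\{t=0\}$ must coincide. I expect the main technical obstacle to be the clean handling of the boundary at $h=0$; monotonicity of the initial datum $\psi$, itself a nontrivial consequence of the Nishimori identity through~\eqref{e.boundary}, is precisely what makes this step routine, reducing the half-line Hopf-Lax formula to the standard full-line one rather than demanding a more delicate boundary construction.
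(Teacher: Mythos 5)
Most of your argument follows the same route as the paper's Appendix~B: Lipschitz continuity and monotonicity of $\psi$, the constant extension of $\psi$ to $(-\infty,0]$ that turns the half-line supremum into the classical full-line Hopf--Lax formula, and uniqueness via the comparison principle all match (the paper verifies the interior sub- and supersolution properties from scratch through a dynamic programming principle rather than citing \cite{evans}, but that is only a matter of self-containedness; note that the Hamiltonian $p \mapsto -2p^2$ is concave, so the relevant classical statement is the sup-form of the formula). Your treatment of the subsolution boundary condition via monotonicity of $f(t,\cdot)$ is correct and is a clean alternative to the paper's argument.

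The genuine gap is in the supersolution boundary condition at $h=0$. You claim that any smooth $\phi$ on $(0,\infty)\times[0,\infty)$ for which $f-\phi$ has a local minimum at $(t,0)$ can be extended to $h<0$ so that the minimum persists. This fails in general: any $C^1$ extension has its first-order behaviour at $h=0^-$ pinned to $\partial_h\phi(t,0)$, while the one-sided minimum only controls $f$ from the right. For a concrete obstruction, take the Lipschitz nondecreasing datum $\td\psi(h')=\max(h',0)$; then $f(t,h)=h+2t$ for $|h|<4t$, and a test function with $\partial_h\phi(t,0)=\tfrac12$ and $\partial_t\phi(t,0)=2$ touches $f$ from below at $(t,0)$ over $\{h\ge0\}$, yet $f(t,h)-\phi(t,h)=h/2+o(h)<0$ for $h<0$ for \emph{every} smooth extension, so the two-sided minimum cannot be restored. (The supersolution condition still holds there, but not by your mechanism.) The correct route --- the one taken in Step~4 of the paper's proof --- is to use the dynamic programming inequality $f(t,h)\ge f(t-s,h-sp)-sp^2/8$, which at $h=0$ is available only for $p\le0$; this yields $\partial_t\phi+p\,\partial_h\phi+p^2/8\ge0$ for all $p\le0$, and then either $-\partial_h\phi(t,0)\ge0$, in which case the max-condition of Definition~\ref{def.visc} is already satisfied, or the optimal choice $p=-4\partial_h\phi(t,0)<0$ is admissible and gives the full inequality $\partial_t\phi-2(\partial_h\phi)^2\ge0$. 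Your extension argument should be replaced by something of this kind.
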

For completeness, we provide a proof of this classical result in Appendix~\ref{s.visc}. Denoting $\mathsf H(p) := 2p^2$ and $\mathsf H^*(q) := \frac{q^2}{8}$, we have the following equivalent expressions for $f$ which may be of interest:
\begin{align*}  
f(t,h) & = \sup_{h' \ge 0} \Ll( \psi({h}') - t \mathsf H^* \Ll( \frac{{h} - {h}'}{t} \Rr)  \Rr) \\
& = \sup_{h' \ge 0} \, \inf_{p \in \R} \Ll( \psi(h') - t \Ll( p \frac{h-h'}{t} - \mathsf H(p) \Rr)  \Rr) \\
& = \sup_{h' \ge 0} \, \inf_{p \in \R} \Ll( \psi(h') - p (h-h') + t \mathsf H(p)  \Rr).
\end{align*}
We stress that the proof of Theorem~\ref{t.hj} does not require that $f$ be identified by such a variational presentation. Moreover, the analysis of $f$ itself does not necessarily require explicit usage of this formula. For instance, if one wants to observe that $\partial_h f(t,0) = 0$ for small values of $t \ge 0$, which at least on a heuristic level corresponds to a regime where there is no correlation between $x$ and $\bar x$, see \eqref{e.formula.partialmu2}, then we may proceed as follows. First, we check that there exists a constant $C < \infty$ such that for every $h \ge 0$, we have $\psi(h) \le C h^2$. (See \eqref{e.psi.first.derivative} for a first step.) We next observe that the function
\begin{equation*}  
(t,h) \mapsto \frac{C h^2}{1-8Ct}
\end{equation*}
is a supersolution of \eqref{e.hj} on $(0,(8C)^{-1})\times [0,\infty)$, and thus, by the comparison principle, the solution $f$ to \eqref{e.hj} remains below this supersolution. Since the null function is a subsolution, we deduce that $\partial_h f(t,0) = 0$ for every $t <(8C)^{-1}$.

%
%
%
%
%
%

\section{Extension to tensors}
\label{s.tensor}

We now explain how to adapt the method to tensors of arbitrary order. In this setting, the result was obtained in \cite{les,bm1}. One motivation for exploring this generalization is that some methods, such as that used in \cite{eak}, do not seem to generalize well to tensors of odd order. 

\smallskip

We fix an integer $p \ge 1$. Generalizing the previous setting, we consider the problem of estimating the vector $\bar x = (\bar x_1,\ldots, \bar x_N) \in \R^N$ given the observation of 
\begin{equation*}  
\frac{\sqrt{t}}{N^{\frac{p-1}{2}}} \, \bar x^{\otimes p}+ W, 
\end{equation*}
where $W = (W_{i_1 \ldots i_p})_{1 \le i_1,\ldots,i_p \le N}$ is now a tensor of order $p$ made of independent standard Gaussian random variables, independent of the vector $\bar x$, and where for any $x \in \R^N$, we denote by~$x^{\otimes p}$ the tensor of order $p$ such that, for every $i_1,\ldots,i_p \in \{1,\ldots,N\}$,
\begin{equation*}  
(x^{\otimes p})_{i_1 \ldots i_p} = x_{i_1} \, \cdots \ x_{i_p}.
\end{equation*}
We redefine $H_N(t,h,x)$ to be
\begin{multline*}  
H_N(t,{h},x) := 
\frac{\sqrt{t}}{N^{\frac{p-1}{2}}} W : x^{\otimes p} + \frac t {N^{p-1}} (x \cdot \bar x)^p - \frac t {2N^{p-1}} |x|^{2p}
\\
+ \sqrt{h} \, z \cdot x + {h} \, x \cdot \bar x - \frac {h} 2 |x|^2,
\end{multline*}
and set $F_N$ and $\bar F_N$ to be as in \eqref{e.enriched.free.energy} and \eqref{e.expectation.free.energy}. The analogue of Theorem~\ref{t.hj} in the context of tensors reads as follows.

\begin{theorem}[Convergence to HJ]
\label{t.hj.tensor}
For every $M \ge 1$, we have
\begin{equation*}  
\lim_{N\to \infty} \E \Ll[ \sup_{[0,M]^2} (F_N - f)^2 \Rr] = 0,
\end{equation*}
where $f(t,h)$ is the viscosity solution of the Hamilton-Jacobi equation 
\begin{equation}  
\label{e.hj.tensor}
\Ll\{
\begin{aligned}
 \partial_t f - 2^{p-1}(\partial_{h} f)^p & = 0 \quad && \text{in } (0,+\infty)^2, \\
 - \partial_h f & = 0 \quad && \text{on }  (0,+\infty) \times \{0\},
\end{aligned}
\Rr.
\end{equation}
with initial condition $f(0,h) = \psi(h)$. 
\end{theorem}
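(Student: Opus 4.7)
The plan is to reprise the proof of Theorem~\ref{t.hj} almost verbatim, the only substantive change being the computation of $\partial_t\bar F_N$ and the resulting form of the ``defect'' $\partial_t\bar F_N - 2^{p-1}(\partial_h\bar F_N)^p$. First I would establish a tensor analogue of Proposition~\ref{p.approx.hj}. A Gaussian integration by parts in the tensor $W$ gives, for every multi-index $(i_1,\ldots,i_p)$,
\begin{equation*}
\E\la W_{i_1\ldots i_p}\,x_{i_1}\cdots x_{i_p}\ra = \sqrt{t/N^{p-1}}\,\E\la (x_{i_1}\cdots x_{i_p})^2 - x_{i_1}\cdots x_{i_p}\,x'_{i_1}\cdots x'_{i_p}\ra.
\end{equation*}
Summing over indices, using $\sum_I x_{i_1}^2\cdots x_{i_p}^2 = |x|^{2p}$ and $\sum_I x_{i_1}x'_{i_1}\cdots x_{i_p}x'_{i_p} = (x\cdot x')^p$, and invoking the Nishimori identity~\eqref{e.nishi}, yields
\begin{equation*}
\partial_t\bar F_N = \frac{1}{2N^p}\,\E\la(x\cdot\bar x)^p\ra, \qquad \partial_h\bar F_N = \frac{1}{2N}\,\E\la x\cdot\bar x\ra,
\end{equation*}
the $h$-derivative being unchanged from the rank-one setting. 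Writing $Y:=x\cdot\bar x$ and $\mu:=\E\la Y\ra$, the defect is therefore
\begin{equation*}
\partial_t\bar F_N - 2^{p-1}(\partial_h\bar F_N)^p = \frac{1}{2N^p}\Ll(\E\la Y^p\ra - \mu^p\Rr).
\end{equation*}

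Next I would bound this defect by the variance of $Y$. Expanding binomially around $\mu$,
\begin{equation*}
\E\la Y^p\ra - \mu^p = \sum_{k=2}^p \binom{p}{k}\,\mu^{p-k}\,\E\la(Y-\mu)^k\ra,
\end{equation*}
the $k=1$ term vanishing by the definition of $\mu$. Since $P$ has bounded support, $|Y|\le CN$ and $|\mu|\le CN$, so that $|\E\la(Y-\mu)^k\ra|\le (CN)^{k-2}\,\E\la(Y-\mu)^2\ra$ for every $k\ge 2$. Combining these facts yields
\begin{equation*}
\Ll|\partial_t\bar F_N - 2^{p-1}(\partial_h\bar F_N)^p\Rr|\le \frac{C_p}{N^2}\,\E\la(Y-\mu)^2\ra,
\end{equation*}
reducing matters to a variance estimate formally identical to the rank-one case. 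The key observation is that Steps~2 and~3 of the proof of Proposition~\ref{p.approx.hj} only involve the $h$-dependent part of $H_N$, which is identical in the tensor setting; they therefore apply verbatim and give the tensor analogue of Proposition~\ref{p.approx.hj}, now with a two-sided bound by the same right-hand side. The boundary property $\partial_h\bar F_N\ge 0$ is preserved since $\partial_h\bar F_N=\frac{1}{2N}\E|\la x\ra|^2$.

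The auxiliary results of Section~\ref{s.approx.hj} require only cosmetic modifications. Lemma~\ref{l.gradient} carries over with the same proof; Lemma~\ref{l.concentration} goes through once one verifies $\E|W|^2\le C_p N$ for the $p$-linear operator norm of $W$, which follows from a standard $\eps$-net argument on $p$ copies of the unit sphere of $\R^N$. With these in place, the viscosity-solution argument of Section~\ref{s.convergence} extends line by line with $2(\partial_h f)^2$ replaced throughout by $2^{p-1}(\partial_h f)^p$: the inequality $|a^p-b^p|\le p\max(|a|,|b|)^{p-1}|a-b|$ together with the uniform bound on $\partial_h\bar F_N$ from Lemma~\ref{l.gradient} handles the manipulations analogous to~\eqref{e.comp.square}. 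The absolute-value form of the defect bound, in place of the one-sided non-negativity of the rank-one case, suffices: at each of the supersolution (Step~1) and subsolution (Steps~2--6) stages one passes to the limit in a one-sided inequality whose error vanishes with $N$.

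The main remaining obstacle is the comparison principle of Appendix~\ref{s.visc} for the PDE with nonlinearity $\mathsf H(q)=2^{p-1}q^p$. Its proof relies only on local Lipschitz continuity of $\mathsf H$ on the range of $\partial_h f$, which by Lemma~\ref{l.gradient} is a bounded subset of $[0,\infty)$; this adaptation is routine and parallels the rank-one case. Convexity of $\mathsf H$ on this half-line also yields an analogue of the Hopf--Lax formula (Proposition~\ref{p.hopf-lax}) with $\mathsf H^*(q)$ a constant multiple of $q^{p/(p-1)}$. No genuinely new analytic ideas beyond those of the rank-one case are required.
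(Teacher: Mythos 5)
Your proposal follows the paper's global strategy, but your treatment of the defect term is a genuine (and slightly sharper) local variant. The paper writes $2\partial_t \bar F_N - 2^{p}(\partial_h \bar F_N)^p = \E\la a^p\ra - b^p$ with $a = (x\cdot\bar x)/N$, $b = \E\la x \cdot \bar x\ra/N$, factors $a^p - b^p = (a-b)(a^{p-1}+\cdots+b^{p-1})$, and applies Cauchy--Schwarz; this only yields a bound on the \emph{square} of the defect, $\Ll|\partial_t \bar F_N - 2^{p-1}(\partial_h \bar F_N)^p\Rr|^2 \le \frac{C}{N^2}\E\la(x\cdot\bar x - \E\la x\cdot\bar x\ra)^2\ra$, so that square roots propagate through the entire vanishing-viscosity argument (see \eqref{e.remind.p.approx.tensor} and \eqref{e.remind.again}). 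Your binomial expansion around $\mu$ exploits the cancellation of the $k=1$ term and bounds the defect itself, to the first power, by the same variance; this is a strictly better estimate for small errors and lets the convergence argument proceed even closer to the rank-one case, with no square roots. Both reductions then feed into the identical Steps 2--3 of Proposition~\ref{p.approx.hj}, so your version of Proposition~\ref{p.approx.hj.tensor} is correct and, if anything, cleaner.

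One point you pass over too quickly: you assert that the supersolution stage ``passes to the limit in a one-sided inequality whose error vanishes with $N$,'' but in the rank-one proof the supersolution step used the \emph{exact} pointwise inequality $\partial_t\bar F_N - 2(\partial_h\bar F_N)^2 \ge 0$ and required no mollification at all. In the tensor case the lower bound carries the error terms $\E[(\partial_h F_N - \partial_h\bar F_N)^2]$ and $C/(Nh)$, so the supersolution step now also requires averaging in $h$ (to invoke Lemma~\ref{l.concentration} via \eqref{e.interpol}), and --- since supersolution contact points with $h=0$ must be handled --- the averaging window must be shifted away from the origin, as in the paper's redefinition $G_N(t',h') = \de_N^{-1}\int_{h'+\de_N}^{h'+2\de_N}\bar F_N(t',h'')\,\d h''$, producing an admissible error $C/(N\de_N)$. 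This is exactly the machinery you already deploy for the subsolution steps, so it is not a gap in substance, but it is a structural change from the rank-one supersolution argument that your ``extends line by line'' claim hides. Your remarks on the comparison principle (local Lipschitz continuity of $q\mapsto 2^{p-1}q^p$ on the bounded range of $\partial_h\bar F_N\subset[0,\infty)$) are adequate.
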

The next proposition is our replacement for Proposition~\ref{p.approx.hj}. \begin{proposition}[Approximate HJ in finite volume]
There exists $C < \infty$ such that for every $N \ge 1$ and uniformly over $[0,\infty)^2$,
\label{p.approx.hj.tensor}
\begin{equation*}  
\Ll|\partial_t \bar F_N - 2^{p-1}(\partial_{h} \bar F_N)^p\Rr|^2 \le \frac{C}{N}  \partial_{h}^2 \bar F_N  + C\E \Ll[ \Ll(\partial_h F_N - \partial_h \bar F_N\Rr)^2 \Rr]  +\frac{C}{N}\Ll(\frac 1 h + \frac 1 {\sqrt{h}}\Rr),
\end{equation*}
and moreover, 
\begin{equation}  
\label{e.boundary.tensor}
\partial_h \bar F_N\ge 0.
\end{equation}
\end{proposition}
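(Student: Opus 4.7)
The plan is to follow the three-step structure of the proof of Proposition~\ref{p.approx.hj}, modifying only the pieces that genuinely depend on the tensor order $p$. The $(z,h)$-part of $H_N(t,h,\cdot)$ is identical to the rank-one case, so any argument that relies only on Gaussian integration by parts in $z$ transfers verbatim; the new work is confined to the $t$-derivative of $\bar F_N$ and the algebraic passage from $\partial_t \bar F_N - 2^{p-1}(\partial_h \bar F_N)^p$ to a variance of $x\cdot \bar x$. For the $t$-derivative, I apply Gaussian integration by parts to each $W_{i_1\cdots i_p}$, sum over the multi-index using
\begin{equation*}
\sum_{i_1,\ldots,i_p = 1}^N x_{i_1}\cdots x_{i_p}\,x'_{i_1}\cdots x'_{i_p} = (x\cdot x')^p,
\end{equation*}
observe as in~\eqref{e.formula.partialt} that the two $|x|^{2p}$ contributions cancel, and invoke the Nishimori identity~\eqref{e.nishi} to rewrite $\E\la (x\cdot x')^p\ra = \E\la (x\cdot \bar x)^p\ra$. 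This gives
\begin{equation*}
\partial_t \bar F_N(t,h) = \frac{1}{2N^p}\,\E\la (x\cdot \bar x)^p\ra.
\end{equation*}
The $h$-derivative is unchanged: $\partial_h \bar F_N = \frac{1}{2N}\E\la x\cdot \bar x\ra$, which also immediately yields~\eqref{e.boundary.tensor} via the identity in~\eqref{e.positivity}.

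Combining the two formulas,
\begin{equation*}
\partial_t \bar F_N - 2^{p-1}(\partial_h \bar F_N)^p = \frac{1}{2N^p}\Ll(\E\la (x\cdot \bar x)^p\ra - (\E\la x\cdot \bar x\ra)^p\Rr),
\end{equation*}
and I linearize through $a^p - b^p = (a-b)\sum_{k=0}^{p-1} a^k b^{p-1-k}$ with $a = x\cdot \bar x$ and $b = \E\la x\cdot \bar x\ra$. The boundedness of $\supp P$ gives the pointwise estimate $|x\cdot \bar x|\le CN$, so each term $|a^k b^{p-1-k}|$ is bounded by $(CN)^{p-1}$; the factor $N^{p-1}$ exactly cancels the prefactor $N^{-p}$, producing
\begin{equation*}
\Ll|\partial_t \bar F_N - 2^{p-1}(\partial_h \bar F_N)^p\Rr| \le \frac{C}{N}\,\E\la \Ll|x\cdot \bar x - \E\la x\cdot \bar x\ra\Rr|\ra,
\end{equation*}
and squaring followed by Jensen's inequality yields
\begin{equation*}
\Ll|\partial_t \bar F_N - 2^{p-1}(\partial_h \bar F_N)^p\Rr|^2 \le \frac{C}{N^2}\,\E\la \Ll(x\cdot \bar x - \E\la x\cdot \bar x\ra\Rr)^2\ra.
\end{equation*}

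The right-hand side above is a Gibbs variance that depends on the Hamiltonian only through its $h$-perturbation, which is the same for every $p$. Consequently Steps~2 and~3 of the proof of Proposition~\ref{p.approx.hj} apply verbatim and give
\begin{equation*}
\E\la (x\cdot \bar x - \E\la x\cdot \bar x\ra)^2\ra \le CN\,\partial_h^2 \bar F_N + CN^2\,\E\Ll[(\partial_h F_N - \partial_h \bar F_N)^2\Rr] + CN\Ll(\tfrac{1}{h} + \tfrac{1}{\sqrt h}\Rr).
\end{equation*}
Substituting this into the previous display produces the announced bound. The single tensor-specific subtlety — and the step I expect to require the most care — is the linearization: for odd $p$ the quantity $\partial_t \bar F_N - 2^{p-1}(\partial_h \bar F_N)^p$ is no longer manifestly non-negative (it is not a variance, as it was in the $p=2$ case), which is precisely why the statement is phrased as a two-sided bound via an absolute value, and one must track the $p$-dependent combinatorial constants carefully to see that the $N^{-p}$ prefactor is exactly absorbed by the polynomial growth of $|x\cdot \bar x|$. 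Beyond this bookkeeping I do not anticipate any serious obstacle.
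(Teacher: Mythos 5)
Your proposal is correct and follows essentially the same route as the paper's own proof: the same Gaussian integration by parts and Nishimori identity yield $\partial_t \bar F_N = \frac{1}{2N^p}\E\la (x\cdot\bar x)^p\ra$, the same factorization $a^p-b^p=(a-b)\sum_k a^k b^{p-1-k}$ together with the boundedness of $\supp P$ reduces the left side to $\frac{C}{N^2}\E\la(x\cdot\bar x-\E\la x\cdot\bar x\ra)^2\ra$, and Steps 2--3 of the rank-one proof are then reused verbatim. Your closing remark about the loss of the sign (and hence the need for the two-sided, squared formulation) is exactly the point the paper makes as well.
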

\begin{proof}[Proof of Proposition~\ref{p.approx.hj.tensor}]
Observe that
\begin{equation}  
\label{e.formula.partialt.tensor}
\partial_t  F_N(t,{h}) = \frac 1 N\la \frac {1} {2N^{\frac{p-1}{2}}\sqrt {t}} \,  W : x^{\otimes p} +  \frac 1 {N^{p-1}} (x\cdot \bar x)^p - \frac 1 {2N^{p-1}} |x|^{2p} \ra.
\end{equation}
By Gaussian integration by parts and the Nishimori identity, we deduce that
\begin{equation*}  
\partial_t \bar F_N(t,{h}) = \frac 1 {2N^{p}} \E \la (x \cdot \bar x)^p \ra.
\end{equation*}
The expressions \eqref{e.formula.partialmu1}-\eqref{e.formula.partialmu2} are still valid (as well as \eqref{e.positivity}). We deduce that
\begin{equation}  
\label{e.variance.identity.tensor}
2\partial_t \bar F_N - 2^{p} (\partial_{h} \bar F_N)^p  = \E \la \Ll(\frac{x \cdot \bar x}{N}\Rr)^p\ra   - \Ll( \E \la \frac{x \cdot \bar x}{N}\ra  \Rr)^p .
\end{equation}
Using that $a^p - b^p = (a-b)(a^{p-1} + \cdots + b^{p-1})$ and the fact that the support of the measure $P$ is bounded, we get that
\begin{equation*}  
\Ll|\partial_t \bar F_N - 2^{p-1} (\partial_{h} \bar F_N)^p \Rr|^2 \le \frac{C}{N^2} \E \la \Ll( x\cdot \bar x - \E\la x\cdot \bar x \ra \Rr)^2 \ra.
\end{equation*}
The arguments in the proof of Proposition~\ref{p.approx.hj} apply without any modification to show that
\begin{align*}  
\E \la \Ll( x\cdot \bar x - \E\la x\cdot \bar x \ra \Rr)^2 \ra  & \le  4 \E \la \big(  H_N'(h,x) - \E \la H_N'(h,x) \ra \big) ^2 \ra + \frac{CN}{\sqrt{h}} \\
& \le 4N \partial_h^2 \bar F_N(t,h) + 4N^2 \, \E \Ll[ \Ll( \partial_h F_N(t,h) - \partial_h \bar F_N(t,h) \Rr) ^2 \Rr] 
\\
&  \qquad+ C N (h^{-1} + h^{-\frac 1 2}).
\end{align*}
Combining the two previous displays yields Proposition~\ref{p.approx.hj.tensor}.
\end{proof}
\begin{proof}[Proof of Theorem~\ref{t.hj.tensor}]
As in the proof of Theorem~\ref{t.hj}, it suffices to show that if $f$ is such that $\bar F_N$ converges locally uniformly to $f$ along a subsequence, then $f$ is a viscosity solution of \eqref{e.hj.tensor}. Abusing notation, we do not write explicitly the subsequence along which the convergence holds.

\smallskip

\emph{Step 1.} We show that $f$ is a viscosity subsolution of \eqref{e.hj.tensor}. The proof follows Steps 2-6 of the proof of Theorem~\ref{t.hj} very closely. The first difference is that we use Proposition~\ref{p.approx.hj.tensor} to replace \eqref{e.remind.p.approx} by
\begin{multline}
\label{e.remind.p.approx.tensor}
\Ll(\partial_t \bar F_N - 2^{p-1} (\partial_h \bar F_N)^p \Rr)(t_N',h') 
 \le 
\Bigg(\frac C {N}  \partial_{h}^2 \bar F_N(t_N',h') 
\\
+ C \E \Ll[ \Ll(\partial_h F_N - \partial_h \bar F_N\Rr)^2(t_N',h') \Rr]  +\frac{C}{N}\Ll(1 + \frac 1 {h'}\Rr)\Bigg)^\frac 1 2.
\end{multline}
(Implicit in this expression is the fact that the quantity under the square root on the right side is nonnegative.) The estimates \eqref{e.error.square} and \eqref{e.average.concentration} still hold and the proofs given there apply without any modification. We deduce as in \eqref{e.comp.square} that
\begin{equation*}  
\Ll|(\partial_h G_N)^p(t_N',h_N') - \de_N^{-1}\int_{h_N'}^{h_N' + \de_N} (\partial_h \bar F_N)^p(t_N',h') \, \d h' \Rr| 
\le C \sqrt{\de_N}.
\end{equation*}
We average the inequality \eqref{e.remind.p.approx.tensor} over $h' \in [h_N',h_N' + \de_N]$, use Jensen's inequality, the estimate above and \eqref{e.average.concentration} to obtain that
\begin{equation*}  
\Ll(\partial_t G_N - 2^{p-1} (\partial_h G_N)^p \Rr)(t_N',h_N') 
 \le 
\Ll(\frac C {N}  \partial_{h}^2 G_N(t_N',h_N') 
+ C \de_N \Rr)^\frac 1 2 + C \sqrt{\de_N},
\end{equation*}
and then conclude as before that \eqref{e.concl.subsol} holds.

\smallskip

\emph{Step 2.} We now show that $f$ is a viscosity supersolution of \eqref{e.hj.tensor}. Let $(t,h) \in (0,\infty)\times [0,\infty)$ and $\phi \in C^\infty((0,\infty)\times [0,\infty))$ be such that $f-\phi$ has a strict local minimum at the point $(t,h)$. We keep the definition of $\de_N$ as in~\eqref{e.def.delta} for consistency of notation (although here a simpler choice not depending on the rate of convergence of $\bar F_N$ to $f$ would also do), and redefine $G_N$ to be
\begin{equation*}  
G_N(t',h') = \de_N^{-1} \int_{h'+\de_N}^{h'+2\de_N} \bar F_N(t',h'') \, \d h''.
\end{equation*}
In this new definition of $G_N$, we have shifted the interval over which the integral is taken by $\de_N$ to the right in order to avoid the singularity of the error term in Proposition~\ref{p.approx.hj.tensor} near $h = 0$.
Since $G_N$ converges to $f$ locally uniformly, there exists a sequence $(t_N,h_N) \in (0,\infty)\times [0,\infty)$ converging to $(t,h)$ as $N$ tends to infinity such that $G_N - \phi$ has a local minimum at $(t_N,h_N)$. By Proposition~\ref{p.approx.hj.tensor}, for every $h' > 0$,
\begin{multline}  
\label{e.remind.again}
\Ll(\partial_t \bar F_N - 2^{p-1} (\partial_h \bar F_N)^p \Rr)(t_N,h') 
 \ge 
-\Bigg(\frac C {N}  \partial_{h}^2 \bar F_N(t_N,h') 
\\
+ C \E \Ll[ \Ll(\partial_h F_N - \partial_h \bar F_N\Rr)^2(t_N,h') \Rr]  +\frac{C}{N}\Ll(1 + \frac 1 {h'}\Rr)\Bigg)^\frac 1 2.
\end{multline}
We also observe that the estimate \eqref{e.average.concentration} still holds in the present context. Averaging the inequality \eqref{e.remind.again} over $h' \in [h_N + \de_N,h_N + 2\de_N]$, using Jensen's inequality, and \eqref{e.average.concentration}, we get that
\begin{equation*}  
\Ll(\partial_t G_N - 2^{p-1} (\partial_h G_N)^p \Rr)(t_N,h_N) 
 \ge 
-\Ll(\frac C {N}  \partial_{h}^2 G_N(t_N,h_N) 
+ C \de_N  +\frac{C}{N\de_N}\Rr)^\frac 1 2.
\end{equation*}
Using Jensen's inequality for the left side of \eqref{e.remind.again} is justified since $\partial_h \bar F_N \ge 0$. Since $\partial_h \bar F_N$ is bounded, we have that $\Ll|\partial_h^2 G_N(t_N,h_N)\Rr| \le \de_N^{-1}$. We thus obtain that
\begin{equation}  
\label{e.lim.nonneg}
\liminf_{N\to \infty} \Ll(\partial_t G_N - 2^{p-1} (\partial_h G_N)^p \Rr)(t_N,h_N) \ge 0.
\end{equation}
If $h_N > 0$ for infinitely many values of $N$, then the first derivatives of $G_N$ and $\phi$ coincide at $(t_N,h_N)$ for these values of $N$, and we thus deduce from \eqref{e.lim.nonneg} that
\begin{equation*}  
\Ll(\partial_t \phi - 2^{p-1} (\partial_h \phi)^p \Rr)(t,h) = \lim_{N\to \infty} \Ll(\partial_t \phi - 2^{p-1} (\partial_h \phi)^p \Rr)(t_N,h_N)\ge 0.
\end{equation*}
On the other hand, if $h_N = 0$ for infinitely many values of $N$, then we can reproduce the argument of Step 1 of the proof of Theorem~\ref{t.hj.tensor} to conclude. Indeed, in this case, we must have $h = 0$,  
\begin{equation}  
\label{e.compare.grad.bdry.tensor}
\partial_h (G_N - \phi)(t_N,h_N) \ge 0, \quad \text{and} \quad \partial_t (G_N - \phi)(t_N,h_N) = 0 .
\end{equation}
If $-\partial_h \phi(t,h) \ge 0$, then there is nothing to show. Else, using the first statement in \eqref{e.compare.grad.bdry.tensor}, we have that
\begin{equation*}  
(\partial_h \phi)^p(t,h) = \lim_{N \to \infty} (\partial_h \phi)^p(t_N,h_N) \le \liminf_{N \to \infty} (\partial_h \bar F_N)^p(t_N,h_N),
\end{equation*}
and thus, using the second statement in \eqref{e.compare.grad.bdry.tensor} and then \eqref{e.lim.nonneg}, we deduce that
\begin{equation*}  
\Ll(\partial_t \phi - 2^{p-1} (\partial_h \phi)^p \Rr)(t,h) \ge \limsup_{N\to \infty} \Ll(\partial_t \bar F_N - 2^{p-1} (\partial_h \bar F_N)^p \Rr)(t_N,h_N) \ge 0,
\end{equation*}
thereby completing the proof.
\end{proof}

\appendix 

%
%
%
%
%
%

\section{Nishimori identity}
\label{s.nishi}

We verify the Nishimori identity stated in \eqref{e.nishi} and \eqref{e.nishi2}. We redefine the variable $Y$ to be
\begin{equation*}  
Y = (Y^{(1)},Y^{(2)}) := \Ll( \sqrt{\frac{t}{N}} \, \bar x \, \bar x^\t + W, \  \sqrt{h} \, \bar x + z\Rr).
\end{equation*}
For every bounded measurable functions $f$ and $g$, we can write the quantity $\E \Ll[ f(\bar x) g(Y) \Rr]$, up to a normalization constant that depends neither on $f$ nor on $g$, as
\begin{equation*}
 \int f(x) g \Ll(  \sqrt{\frac{t}{N}} \, x \, x^\t + W, \  \sqrt{h} \, x + z \Rr) \, \exp \Ll( - \sum_{i,j = 1}^N \frac{W_{ij}^2}2 - \sum_{i = 1}^N \frac{z_i^2}{2} \Rr) \, \d W \, \d z \, \d P_N(x),
\end{equation*}
with the shorthand notation $dW := \prod_{i,j} dW_{ij}$ and $dz := \prod_i dz_i$. A change of variables leads to 
\begin{multline*}  
\int f(x) g(Y^{(1)},Y^{(2)}) \\
 \exp \Ll( -\frac 1 2 \sum_{i,j = 1}^N \Ll( Y^{(1)}_{ij} - \sqrt{\frac t N} x_i x_j \Rr)^2 - \sum_{i= 1}^N \frac{(Y^{(2)}_i - \sqrt{h} \, x)^2}{2} \Rr) \, \d Y \, \d P_N(x). 
\end{multline*}
Denoting the exponential factor above by $\mcl E(x,Y)$, we thus obtain that the law of $Y$ is the law with density given, up to a normalization constant, by
\begin{equation*}  
\bar {\mcl E}(Y) := \int \mcl E(x,Y) \, \d P_N(x),
\end{equation*}
and that
\begin{equation}  
\label{e.joint.law}
\E \Ll[ f(\bar x) g(Y) \Rr] = \int f(x) \frac{\mcl E(x,Y)}{\bar {\mcl E}(Y)} \, \d P_N(x)  \, g(Y) \, \bar {\mcl E}(Y) \, \d Y.
\end{equation}
The conditional law of $\bar x$ given $Y$ is thus the probability measure given by
\begin{equation*}  
\frac{\mcl E(x,Y)}{\bar{\mcl E}(Y)} \, \d P_N(x).
\end{equation*}
Moreover, a calculation similar to that in \eqref{e.def.H1} yields that this quantity can be rewritten as
\begin{equation*}  
\frac{e^{H_N(t,h,x)} \, \d P_N(x)}{\int_{\R^N} e^{H_N(t,h,x')} \, \d P_N(x')},
\end{equation*}
which is the Gibbs measure defined in \eqref{e.def.Gibbs}. We denote by $x^{(1)}, \ldots, x^{(k)}$ a sequence of $k$ random variables which, conditionally on $\bar x$, $W$ and $z$, are independent and distributed according to this measure; we denote their joint (conditional) law by $\la \cdot \ra$. We thus have that, for every $k \in \N$ and bounded measurable function $f (x^{(1)},\ldots,x^{(k)},Y)$,
\begin{multline*}  
\E \la f(x^{(1)},\ldots,x^{(k)},Y) \ra = 
\int f(x^{(1)},\ldots,x^{(k)},Y) \\
\times \frac{\mcl E(x^{(1)},Y)}{\bar{\mcl E}(Y)} \, \d P_N(x^{(1)}) \ \cdots \ \frac{\mcl E(x^{(k)},Y)}{\bar{\mcl E}(Y)} \, \d P_N(x^{(k)}) \, \bar{\mcl E}(Y) \, \d Y.
\end{multline*}
In view of the expression for the joint law of $\bar x$ and $Y$ obtained in \eqref{e.joint.law}, we deduce that
\begin{equation*}  
\E \la f(x^{(1)},\ldots,x^{(k)},Y) \ra = \E \la f(x^{(1)},\ldots,x^{(k-1)},\bar x,Y) \ra.
\end{equation*}
This implies in particular that \eqref{e.nishi} and \eqref{e.nishi2} hold.

%
%
%
%
%
%

\section{Classical results on viscosity solutions}
\label{s.visc}

In this appendix, for the reader's convenience, we prove the comparison principle (Proposition~\ref{p.comparison}) and the Hopf-Lax formula (Proposition~\ref{p.hopf-lax}) for solutions of the Hamilton-Jacobi equation \eqref{e.hj}. Classical references for such results include \cite{evans,guide}. 
\begin{proof}[Proof of Proposition~\ref{p.comparison}]
We argue by contradiction, assuming instead that
\begin{equation}  
\label{e.max.int}
\sup_{[0,\infty)^2} (u-v) > \sup_{\{0\}\times [0,\infty)} (u-v).
\end{equation}
The argument rests on the idea of ``doubling the variables'' and considering the maximization of functions of the form
\begin{equation}  
\label{e.function.present}
\Ll((t,h),(t',h')\Rr) \mapsto u(t,h) - v(t',h') - \frac 1 {2\al} \Ll( |t-t'|^2 + |h-h'|^2 \Rr) ,
\end{equation}
where $\alpha > 0$ is a parameter that is ultimately sent to $0$.
We will decompose this argument into four steps. In Step 1, we modify the functions $u$ and $v$ slightly so that they become strict sub- and supersolutions respectively. In Step 2, we modify the function $u$  further to ensure that the maximum of the function in \eqref{e.function.present} is achieved at a point that remains in a bounded set as $\al \to 0$. In a preliminary Step 0, we build a convenient special function for this purpose. The conclusion is then derived in Step 3.

\smallskip

\emph{Step 0.} We build a special function $\Phi_\de \in C^\infty([0,\infty)^2)$ such that for every $T,H > 0$ and $\delta > 0$ sufficiently small, the following properties hold.
\begin{equation}  
\label{e.psi.local.negligible}
\|\Phi_\delta\|_{L^\infty([0,T]\times [0,H])}  + \|\nabla \Phi_\delta\|_{L^\infty([0,T]\times [0,H])} \le \de,
\end{equation}
\begin{equation}  
\label{e.psi.lower}
\forall t\ge 0, \ \forall  h \ge 2 \delta^{-2} , \qquad \Phi_\delta(t,h) \ge \delta^{-1} h,
\end{equation}
and
\begin{equation}  
\label{e.bound.der}
\partial_t \Phi_\delta \ge \delta^{-\frac 1 2} |\partial_h \Phi_\delta| \qquad \text{on } [0,T] \times [0,\infty).
\end{equation}
Let $\chi \in C^\infty_c(\R)$ be a smooth function satisfying $0 \le \chi \le 1$ and such that $\chi \equiv 0$ on $(-\infty,0]$ and $\chi \equiv 1$ on $[1,\infty)$. For every $z \in \R$, we set
\begin{equation*}  
\sigma(z) := \int_{-\infty}^z e^{z'} \, (1-\chi(z')) \, \d z'.
\end{equation*}
Note that for every $z \in (-\infty,0]$, we have that $\sigma(z) = e^{z}$ and that $\sigma$ is constant (and $\sigma \ge 1$) on $[1,\infty)$. Moreover, there exists a  constant $C < \infty$ such that 
\begin{equation}  
\label{e.si'si}
0 \le \sigma' \le C \sigma 
.
\end{equation}
For every $\delta \in (0,1]$, we consider the function
\begin{equation*}  
\Phi_\delta(t,h) :=  \delta^{-1} h  \sigma(\delta h - \de^{-1}) +\delta^{-3} t \sigma(\de h - \de^{-1}). 
\end{equation*}
Roughly speaking, the function $h \mapsto \si(\de h - \de^{-1})$ serves as a smoothed indicator function for the set $\{h \ge \de^{-2}\}$.
The properties \eqref{e.psi.local.negligible} and \eqref{e.psi.lower} are immediate. For the last property, we observe that
\begin{equation*}  
\partial_t \Phi_\delta(t,h) = \de^{-3} \sigma(\de h - \de^{-1}),
\end{equation*}
\begin{equation*}  
\partial_h \Phi_\delta(t,h) = \delta^{-1} \sigma(\de h - \de^{-1}) + h \si'(\de h - \de^{-1}) + \de^{-2} t \si'(\de h - \de^{-1}) .
\end{equation*}
Since $\sigma'$ is supported in $(-\infty,1]$, the second term on the right side vanishes whenever $h \le \de^{-2} + \de^{-1} \le 2 \de^{-2}$. We deduce that 
\begin{equation*}  
0 \le \partial_h \Phi_\delta(t,h) \le  \delta^{-1} \sigma(\de h - \de^{-1}) + 2\de^{-2} \si'(\de h -\de^{-1}) + \de^{-2} t \si'(\de h - \de^{-1}).
\end{equation*}
The inequality \eqref{e.bound.der} thus follows using \eqref{e.si'si}.

\smallskip

\emph{Step 1.} We show that without loss of generality, we can assume that there exists $\ep > 0$ such that $u$ is a viscosity solution of
\begin{equation}
\label{e.hj.sub.ep}
\Ll\{
\begin{aligned}
 \partial_t u - 2(\partial_{h} u)^2 & \le -\ep \quad && \text{in } (0,+\infty)^2, \\
 - \partial_h u & \le -\ep \quad && \text{on }  (0,+\infty) \times \{0\}.
\end{aligned}
\Rr.
\end{equation}
Indeed, since we assume that $\partial_h u \in L^\infty([0,\infty)^2)$, it suffices to replace $u$ by $u_\ep := u + \eps h - C \eps t$ for a sufficiently large constant $C$ depending on $\|\partial_h u\|_{L^\infty([0,\infty)^2)}$, and then select $\ep > 0$ sufficiently small that the property \eqref{e.max.int} still holds. Similarly, we can assume that the function $v$ is a viscosity solution of
\begin{equation}
\label{e.hj.sup.ep}
\Ll\{
\begin{aligned}
 \partial_t v - 2(\partial_{h} v)^2 & \ge \ep \quad && \text{in } (0,+\infty)^2, \\
 - \partial_h v & \ge \ep \quad && \text{on }  (0,+\infty) \times \{0\}.
\end{aligned}
\Rr.
\end{equation}
Note that these modifications preserve the fact that $u$ and $v$ are uniformly Lipschitz continuous in $h$. 

\smallskip

\emph{Step 2.} 
We now ``localize'' the function $u$, in the sense that we make sure that the function becomes very negative as me move away from a bounded set.  To start with, we can replace $u$ by $u-\frac{\ep}{T-t}$ for some $\ep > 0$ and $T \in (0,\infty)$. This preserves the fact that $u$ solves \eqref{e.hj.sub.ep} on $(0,T)\times [0,\infty)$, and for $\ep > 0$ sufficiently small and $T$ sufficiently large, it also preserves the property \eqref{e.max.int} in the sense that
\begin{equation}  
\label{e.max.int2}
\sup_{[0,T) \times [0,\infty)^2} (u-v) > \sup_{\{0\}\times [0,\infty)} (u-v).
\end{equation}
This modification of $u$ also ensures that for every $H > 0$, 
\begin{equation}  
\label{e.u.diverge.time}
\lim_{\eta \to 0} \sup \{u(t,h) \ : (t,h) \in [ T - \eta,T) \times [0 , H]\} = -\infty.
\end{equation}
Note that we still have that 
\begin{equation*}  
\sup_{t \in [0,T)} u(t,0) < + \infty,
\end{equation*}
and that $u$ is uniformly Lipschitz continuous in $h$, and thus there exists a constant $C < \infty$ such that
\begin{equation}  
\label{e.u.still.upper}
\forall (t,h) \in [0,T)\times [0,\infty), \qquad u(t,h) \le C(1+h).
\end{equation}
Our final modification is to replace $u$ by $u_\de := u-\Phi_\de$ for the function $\Phi_\de$ defined in the Step 0. It is clear from \eqref{e.psi.local.negligible} that for $\delta > 0$ sufficiently small, the properties \eqref{e.max.int2} and \eqref{e.u.diverge.time} still hold with $u$ replaced by $u_\de$. It is also clear from \eqref{e.psi.lower} and \eqref{e.u.still.upper} that
for every $\de > 0$ sufficiently small, we have
\begin{equation}  
\label{e.u.goes.down}
\forall (t,h) \in [0,T)\times [0,+\infty),  \qquad \ u_\de(t,h) \le C(1+h)- \de^{-1} h \1_{\{h \ge 2\de^{-2}\}}.
\end{equation}
There remains to verify that $u_\delta$ is still a solution of \eqref{e.hj.sub.ep}, possibly after replacing $\ep$ by $\ep/2$. Formally, the verification of the boundary condition on $[0,T)\times \{0\}$ is immediate from \eqref{e.psi.local.negligible}, while we have
\begin{equation*}  
\partial_t u_\delta - 2 (\partial_h u_\delta)^2 = \partial_t u - 2 (\partial_h u)^2 - \partial_t \Phi_\delta - 4 \partial_h u \, \partial_h \Phi_\delta - 2 (\partial_h \Phi_\delta)^2,
\end{equation*}
and since $\partial_h u$ is bounded, it follows from \eqref{e.bound.der} that for every $\delta \ge 0$ sufficiently small,
\begin{equation*}  
\partial_t \Phi_\delta + 4 \partial_h u \, \partial_h \Phi_\delta + 2 (\partial_h \Phi_\delta)^2 \ge 0.
\end{equation*}
This formal calculation is easily made rigorous using test functions. 

\smallskip

Finally, we observe that as a consequence of \eqref{e.u.goes.down} and the Lispschitz continuity of $v$ in the $h$ variable, there exists a constant $C < \infty$ such that for every $t\in [0,T)$, $h,t',h'\ge 0$, we have
\begin{equation*}  
u_\de(t,h) - v(t',h') \le C(1+h+h') - \de^{-1} h \1_{\{h \ge 2\de^{-2}\}}.
\end{equation*}
We select $\de > 0$ sufficiently small that for $t,h,t',h'$ as above,
\begin{equation}
\label{e.bound.uv}
u_\de(t,h) - v(t',h') \le C\Ll(1+h+h'-2  h \1_{\{h \ge 2\de^{-2}\}}\Rr).
\end{equation}

\smallskip

\emph{Step 3.}
Summarizing the result of the previous steps, we have shown that without loss of generality, we can assume that that $v$ solves \eqref{e.hj.sub.ep} on $(0,\infty)\times [0,\infty)$, that $u$ solves \eqref{e.hj.sup.ep} on $(0,T)\times [0,\infty)$ and satisfies \eqref{e.u.diverge.time} and \eqref{e.bound.uv} for every $t \in [0,T)$ and $h,t',h' \ge 0$, and that \eqref{e.max.int2} holds. 

\smallskip

We define, for every $\alpha \in (0,1]$ and $t \in [0,T)$, $h,t',h' \ge 0$,
\begin{equation*}  
\Psi_\alpha(t,h,t',h') := u(t,h) - v(t',h') - \frac 1 {2\al} \Ll( |t-t'|^2 + |h-h'|^2 \Rr) .
\end{equation*}
We claim that the maximum of $\Psi_\alpha$ is achieved at some point $(t_\al,h_\al,t'_\al,h'_\al)$, and that this point remains in a bounded set as $\al > 0$ is sent to $0$. For fixed $\al > 0$, consider a sequence of approximate maximizers for $\Psi_\alpha$ denoted by $(t_{n,\al},h_{n,\al},t'_{n,\al},h'_{n,\al})$. We deduce from \eqref{e.bound.uv} that for some $C < \infty$,
\begin{equation*}  
C\Ll(1+h_{n,\al}+h'_{n,\al}-2  h_{n,\al} \1_{\{h_{n,\al} \ge 2\de^{-2}\}}\Rr) 
- \frac 1 {2\al} \Ll(  |t_{n,\al}-t'_{n,\al}|^2 + |h_{n,\al}-h'_{n,\al}|^2 \Rr)  \ge -C,
\end{equation*}
and in particular, 
\begin{equation*}  
\frac 1 2 \Ll(  |t_{n,\al}-t'_{n,\al}|^2 + |h_{n,\al}-h'_{n,\al}|^2 \Rr) 
- C\Ll(1+h_{n,\al}+h'_{n,\al}-2  h_{n,\al} \1_{\{h_{n,\al} \ge 2\de^{-2}\}}\Rr)  \le C.
\end{equation*}
If $h_{n,\al} < 2 \de^{-2}$, then we can obtain a uniform upper bound on $h'_{n,\al}$, and thus also on $|t_{n,\al}-t'_{n,\al}|$. Otherwise, we can first obtain a uniform upper bound on $|h_{n,\al}-h'_{n,\al}|$, and then deduce an upper bound on $h_{n,\al}$, $h'_{n,\al}$ and $|t_{n,\al}-t'_{n,\al}|$. Finally, we can use \eqref{e.u.diverge.time} to conclude that the maximizer of~$\Psi_\alpha$ exists and remains in a bounded set as $\al$ tends to $0$.

\smallskip

Since $u - v$ remains bounded from above over this bounded set, see \eqref{e.bound.uv}, there exists a constant $C < \infty$ such that for every $\al > 0$ sufficiently small, we have
\begin{equation*}  
|t_\al-t_\al'|^2 + |h_\al-h_\al'|^2  \le C \al.
\end{equation*}
After extracting a subsequence if necessary, we may assume that $t_\alpha,t'_\alpha \to t_0$ and $h_\al,h'_\al \to h_0$ as $\alpha \to 0$. Using again \eqref{e.u.diverge.time}, it is clear that $t_0 < T$. 
Since $(t_\al,h_\al,t_\al',h_\al')$ is a maximizer of $\Psi_\al$, we have
\begin{equation}  
\label{e.psi.sup}
\Psi_\al(t_\al,h_\al,t_\al',h_\al') \ge \sup_{[0,\infty)^2} (u-v) \ge u(t_0,h_0) - v(t_0,h_0).
\end{equation}
Since we also have
\begin{equation*}  
\Psi_\alpha(t_\al,h_\al,t_\al',h_\al') \le u(t_\alpha,h_\al) - v(t'_\al,h'_\alpha),
\end{equation*}
and the functions $u$ and $v$ are continuous at $(t_0,h_0)$, we deduce, using \eqref{e.psi.sup} twice, that
\begin{equation}  
\label{e.identif.x0}
\lim_{\al \to 0} \Psi_{\alpha}(t_\al,h_\al,t_\al',h_\al') = u(t_0,h_0) - v(t_0,h_0) = \sup_{[0,\infty)^2} (u-v).
\end{equation}
In view of \eqref{e.max.int} and the second equality in \eqref{e.identif.x0}, we have that $t_0 > 0$, and thus, for every $\al > 0$ sufficiently small, we have that $t_\al  > 0$ and $t'_\al > 0$. 
Note that by definition, the function 
\begin{equation*}  
(t,h) \mapsto u(t,h) - v(t'_\al,h'_\al) -  \frac 1 {2\al} \Ll( |t-t'_\al|^2 + |h-h'_\al|^2 \Rr)
\end{equation*}
has a local maximum at $(t,h) = (t_\al,h_\al)$. If $h_\al = 0$, then the definition of viscosity solutions implies that, for every $\alpha > 0$ sufficiently small,
\begin{equation*}  
\min \Ll(-\frac 1 \alpha (h_\al - h_\al'), \frac{1}{\al} (t_\al - t'_\al) - \frac{2}{\al^2} (h_\al - h'_\al)^2 \Rr)\le -\ep.
\end{equation*}
Since in this case $(h_\al = 0)$ the first term in the minimum is nonnegative, we deduce that
\begin{equation}  
\label{e.first.-ep}
\frac{1}{\al} (t_\al - t'_\al) - \frac{2}{\al^2} (h_\al - h'_\al)^2\le -\ep.
\end{equation}
As can be seen directly, this conclusion also holds when $h_\al > 0$. Similarly, we infer from the fact that the function
\begin{equation*}  
(t',h') \mapsto v(t',h') - u(t_\al,h_\al) + \frac 1 {2\al}\Ll(|t_\al-t'|^2 + |h_\al-h'|^2 \Rr)
\end{equation*}
has a local minimum at $(t',h') = (t'_\al,h'_\al)$ that
\begin{equation*}  
\frac{1}{\al} (t_\al - t'_\al) - \frac{2}{\al^2} (h_\al - h'_\al)^2 \ge \ep.
\end{equation*}
This is in contradiction with \eqref{e.first.-ep}, and thus the proof is complete.
\end{proof}

We now turn to the proof of Proposition~\ref{p.hopf-lax}.

\begin{proof}[Proof of Proposition~\ref{p.hopf-lax}]
We decompose the proof into five steps.

\smallskip

\emph{Step 1.} We show that the function $\psi$ is uniformly Lipschitz. This function is clearly differentiable at every $h > 0$ and
\begin{align*}  
\psi'(h) = \E \la \frac{zx}{2\sqrt{h}} +x\bar x - \frac{x^2}{2}\ra ,
\end{align*}
where here the notation $\la \cdot \ra$ simplifies into 
\begin{equation*}  
\la f(x) \ra := \frac{\int_\R f(x)\exp\Ll(\sqrt{h} z x + h x \bar x - \frac h 2 x^2 \Rr) \, \d P(x)}{\int_\R \exp\Ll(\sqrt{h} z x + h x \bar x - \frac h 2 x^2 \Rr) \, \d P(x)}.
\end{equation*}
By Gaussian integration by parts, we obtain that
\begin{equation}  
\label{e.psi.first.derivative}
\psi'(h) =\E \la x \bar x\ra.
\end{equation}
Since we assume that the support of the measure $P$ is bounded, this completes the proof that $\psi'$ is uniformly bounded.

\smallskip

\emph{Step 2.} For convenience, we  extend $\psi$ to be constant equal to $\psi(0)$ on $(-\infty,0]$, so that for every $t,h \ge 0$,
\begin{align*}  
f(t,h) & = \sup_{h' \in \R} \Ll( \psi({h}') - \frac{({h} - {h}')^2}{8t} \Rr) \\
& = \sup_{h' \in \R} \Ll( \psi(h-{h}') - \frac{(h')^2}{8t} \Rr) .
\end{align*}
For every $h,h_1 \ge 0$, we have
\begin{equation*}  
\psi(h-h') -  \frac{(h')^2}{8t} \le\psi(h_1-h') -  \frac{(h')^2}{8t} + \|\psi'\|_{L^\infty(\R)} \, |h_1 - h|,
\end{equation*}
and thus, taking the supremum over $h'$ on both sides,
\begin{equation*}  
f(t,h) \le f(t,h_1) + \|\psi'\|_{L^\infty(\R)} \, |h_1 - h|.
\end{equation*}
Since the roles of $h$ and $h_1$ are symmetric, this shows that $f$ is uniformly Lipschitz in the $h$ variable.

\smallskip

\emph{Step 3.} As preparation for the proof that $f$ is a viscosity solution of~\eqref{e.hj}, we prove the dynamic programming principle, namely, that for every $t,s,h \ge 0$,
\begin{equation}
\label{e.dynamic.1}
f(t+s,h) = \sup_{h' \ge 0} \Ll( f(t,h') - \frac{(h-h')^2}{8s} \Rr) .
\end{equation}
By convexity of the square function, we have, for every $t,s,h,h',h'' \ge 0$,
\begin{equation}  
\label{e.convex.square}
\Ll( \frac{h-h'}{t+s} \Rr) ^2 \le \frac{t}{t+s} \Ll( \frac{h''-h'}{t} \Rr)^2  + \frac{s}{t+s} \Ll( \frac{h-h''}{s} \Rr)^2,
\end{equation}
and thus
\begin{align*}  
f(t+s,h) & \ge \sup_{h',h'' \ge 0} \Ll( \psi(h') - \frac{(h''-h')^2}{8t} - \frac{(h-h'')^2}{8s} \Rr)  
\\
& \ge \sup_{h'' \ge 0} \Ll( f(t,h'') - \frac{(h-h'')^2}{8s} \Rr) .
\end{align*}
This proves one inequality in \eqref{e.dynamic.1}. The converse inequality is immediate, since the right side of \eqref{e.dynamic.1} is
\begin{equation*}  
\sup_{h',h'' \ge 0} \Ll( \psi(h') - \frac{(h''-h')^2}{8t} - \frac{(h-h'')^2}{8s} \Rr),
\end{equation*}
and, for each fixed $h,h' \ge 0$, we can achieve the case of equality in \eqref{e.convex.square} for some $h'' \ge 0$, so that
\begin{equation*}  
\inf_{h'' \ge 0}\Ll( \frac{(h''-h')^2}{8t} + \frac{(h-h'')^2}{8s} \Rr) =  \frac{(h-h')^2}{8(t+s)}.
\end{equation*}

\smallskip

\emph{Step 4.} We show that $f$ is a viscosity supersolution of \eqref{e.hj}. Let $(t,h) \in (0,\infty)\times [0,\infty)$ and $\phi \in C^\infty((0,\infty)\times [0,\infty)$ be such that $(t,h)$ is a local minimum of $f-\phi$. We start by assuming that $h > 0$. By \eqref{e.dynamic.1}, we have that for every $p \in \R$ and $s > 0$ sufficiently small,
\begin{equation*}  
f(t,h) \ge f(t-s,h-sp) - \frac{sp^2}{8}.
\end{equation*}
Since $f-\phi$ has a local minimum at $(t,h)$, we have that for every $p \in \R$ and $s > 0$ sufficiently small,
\begin{equation*}  
f(t-s,h-sp)-\phi(t-s,h-sp) \ge f(t,h)-\phi(t,h).
\end{equation*}
Combining these two inequalities and passing to the limit $s \to 0$, we deduce that
\begin{equation*}  
\partial_t \phi(t,h) + p \partial_h \phi(t,h) + \frac{p^2}{8} \ge 0,
\end{equation*}
and taking the infimum over $p \in \R$ yields 
\begin{equation}  
\label{e.app.supersol}
\Ll( \partial_t \phi - 2 (\partial_h \phi)^2 \Rr) (t,h) \ge 0,
\end{equation}
as desired. In the case when $h = 0$, the same reasoning applies, except that we need to restrict to values of $p$ such that $p \le 0$. That is, for every $p \le 0$,
\begin{equation*}  
\partial_t \phi(t,h) + p \partial_h \phi(t,h) + \frac{p^2}{8} \ge 0.
\end{equation*}
If $- \partial_h \phi(t,h) \ge 0$, then there is nothing to show. Otherwise, we can choose $p = -4\partial_h \phi(t,h)$ and conclude for the validity of \eqref{e.app.supersol}.

\smallskip

\emph{Step 5.} We show that $f$ is a viscosity subsolution of \eqref{e.hj}.
Let $(t,h) \in (0,\infty)\times [0,\infty)$ and $\phi \in C^\infty((0,\infty)\times [0,\infty)$ be such that $(t,h)$ is a local maximum of $f-\phi$. In view of \eqref{e.dynamic.1} and of the fact that $f$ is uniformly Lipschitz in the $h$ variable, it is clear that for each $s > 0$, there exists $h'_s \in \R$ such that 
\begin{equation*}  
f(t,h) = f(t-s,h-h'_s) - \frac{(h'_s)^2}{8s},
\end{equation*}
and moreover, we have that $h'_s \to 0$ as $s \to 0$. Since $f-\phi$ has a local maximum at $(t,h)$, we have that, for every $s > 0$ sufficiently small,
\begin{equation*}  
(f-\phi)(t-s,h-h_s') \le (f-\phi)(t,h),
\end{equation*}
and thus
\begin{equation}
\label{e.realize.sup}
\phi(t,h) - \phi(t-s,h-h_s') \le - \frac{(h'_s)^2}{8s}.
\end{equation}
We start by assuming that $h > 0$, in which case we aim to show that 
\begin{equation}  
\label{e.app.subsol}
\Ll( \partial_t \phi - 2 (\partial_h \phi)^2 \Rr) (t,h) \le 0.
\end{equation}
We argue by contradiction, assuming the negation of \eqref{e.app.subsol}. By continuity, there exist $\ep, \de > 0$ such that for every $s, a \in [-\de,\de]$, we have
\begin{equation}  
\label{e.app.interior}
\Ll( \partial_t \phi - 2 (\partial_h \phi)^2 \Rr) (t-s,h-a) \ge  \ep,
\end{equation}
and thus, for every such $s$ and $a$ and every $p \in \Rd$,
\begin{equation*}  
\Ll( \partial_t \phi + p \partial_h \phi\Rr) (t-s,h-a) \ge \ep -  \frac{p^2}{8} .
\end{equation*}
By integration, we deduce that, for every $s \in ( 0,\de]$,
\begin{align*}  
\notag
\phi(t,h) - \phi(t-s,h-a) & = \int_0^1 \Ll( s\partial_t \phi + a \partial_h \phi\Rr)(t-\alpha s, h - \alpha a) \, \d \alpha 
\\
\notag
& 
= s \int_0^1 \Ll( \partial_t \phi + \frac{a}{s} \partial_h \phi\Rr)(t-\alpha s, h - \alpha a) \, \d \alpha 
\\
& 
\ge s\ep - \frac{a^2}{8s}.
\end{align*}
For $s > 0$ sufficiently small, we can choose $a = h'_s$ in the inequality above, and reach a contradiction with \eqref{e.realize.sup}. This shows \eqref{e.app.subsol} in the case $h > 0$.

\smallskip

When $h = 0$, our starting point has to be modified from \eqref{e.app.interior} to the statement that for every $s \in [-\de,\de]$ and $a \in [-\delta,0]$,
\begin{equation*}  
\min \Ll( -\partial_h \phi,  \partial_t \phi - 2 (\partial_h \phi)^2 \Rr)(t-s,h-a) \ge \ep.
\end{equation*}
We can then reproduce the argument above and arrive at a contradiction.
\end{proof}

\subsection*{Acknowledgments} I would like to warmly thank Louigi Addario-Berry and Pascal Maillard for stimulating discussions, the CRM and Jessica Lin for hospitality, and Florent Krzakala and Lenka Zdeborov\'a for organizing a conference in  Carg\`ese which inspired the present paper. I was partially supported by the ANR grants LSD (ANR-15-CE40-0020-03) and Malin (ANR-16-CE93-0003) and by a grant from the NYU--PSL Global Alliance.

\small
\bibliographystyle{abbrv}
\bibliography{HJinfer}

\end{document}